\let \a = \alpha
\def \d {\displaystyle}
\let \t = \theta
\newcommand*{\maison}[1]{%
  \mathord{%
    \mathpalette\@maison{#1}%
  }%
}
\newcommand*{\@maison}[2]{%
  \dimen@=\fontdimen8 %
      \ifx#1\scriptscriptstyle\scriptscriptfont
      \else\ifx#1\scriptstyle\scriptfont
      \else\textfont\fi\fi
      3 %
  \sbox0{%
    $#1%
      \vrule width\dimen@\relax
      \overline{%
        \kern2\dimen@
        \begingroup 
          #2%
        \endgroup
        \kern2\dimen@
      }%
      \vrule width\dimen@\relax
      \mathsurround=1.5\dimen@ 
    $%
  }%
  \ht0=\dimexpr\ht0-\dimen@\relax
  \dp0=\dimexpr\dp0+2\dimen@\relax
  \vbox{%
    \kern\dimen@ 
    \copy0 %
  }%
}
\newcommand{\bbC}{\mathbb{C}}
\newcommand{\bbQ}{\mathbb{Q}}
\newcommand{\bbR}{\mathbb{R}}
\newcommand{\bbZ}{\mathbb{Z}}
\DeclareMathOperator{\Real}{Re}
\DeclareMathOperator{\res}{res}
\DeclareMathOperator{\mHgt}{M}
\newtheorem{Th}{Theorem}
\newtheorem{Con}[Th]{Conjecture}
\newtheorem{Cor}[Th]{Corollary}
\newtheorem{Lem}[Th]{Lemma}
\theoremstyle{definition}
\newtheorem{Rem}{Remark}
\newtheorem*{Rem-nonum}{Remark}
\begin{document}

\title{Properties of Trinomials of Height at least $2$}

\author{V. Flammang and P. Voutier}

\date{}

\maketitle

\begin{abstract}
This paper is concerned with trinomials of the form $z^{n}+az^{m}+b$, where
$0<m<n$ are relatively prime integers and $a$ and $b$ are non-zero complex
numbers. Typically (but not exclusively), $a$ will be an integer with $|a| \geq 2$,
while $b=\pm 1$. Our main results cover the irreducibility, Mahler measure and
house of such trinomials.
\end{abstract}

\section{Introduction}

Trinomials have long been of interest to algebraists and number theorists.

For example, in 1907, O. Perron \cite{Pe} proved that the trinomial $z^{n}+az \pm 1$
is irreducible over $\bbQ$ for $|a| \geq 3$ and his results have been generalised
since then. Still, our understanding of the irreducibility of such polynomials is
weak. Theorem~\ref{thm:irreduc} below improves our knowledge when $b=\pm 1$
and we have substantial evidence that the truth is captured in Conjecture~\ref{conj:irreduc}
below.

The diophantine properties of trinomials are also of considerable importance.
C.J. Smyth proved that if $P$ is not a reciprocal polynomial, then the Mahler
measure of $P$ is bounded below by $\theta_{0}=1.324717\ldots$, the real zero
of the trinomial $z^{3}-z-1$. Furthermore, it is known that $\theta_{0}$ is also
the smallest Pisot number. The Mahler measure of $P$ is
also the smallest limit point of the Mahler measure of trinomials in $\bbZ[z]$
and one of the smallest known limit points for general polynomials in $\bbZ[z]$
\cite[Table~1, p. 412]{BM}.
See Subsections~\ref{subsec:Mahler} and \ref{subsec:house} for our results on
the Mahler measure of trinomials and, the related quantity, their house.

\subsection{Irreducibility of trinomials $z^{n}+az^{m} \pm 1 \in \bbZ[z]$}

We have the following result, generalising the result of Perron cited above.

\begin{Th}
\label{thm:irreduc}
If $a$, $m$ and $n$ are integers satisfying $n \geq 3$, $0<m<n$, $\gcd(m,n)=1$
and $|a| \geq n^{2}/3$, then $x^{n}+ax^{m}\pm 1$ is irreducible over $\bbQ$.
\end{Th}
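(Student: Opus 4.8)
The plan is a root-localisation argument generalising Perron's. Assume $P(z)=z^{n}+az^{m}+b$ with $b=\pm1$ admits a factorisation $P=fg$ with $f,g\in\bbZ[z]$ monic (Gauss's lemma) and $\deg f,\deg g\geq1$; we must derive a contradiction. Since $f(0)g(0)=P(0)=b=\pm1$, both $f(0)$ and $g(0)$ lie in $\{-1,1\}$, so $|f(0)|=|g(0)|=1$. As $|a|\geq n^{2}/3\geq3$, on $|z|=1$ we have $|z^{n}+b|\leq2<|a|=|az^{m}|$, so Rouch\'e's theorem gives that $P$ has exactly $m$ zeros (counted with multiplicity) in $|z|<1$ --- call these the \emph{small} zeros --- and $n-m$ zeros in $|z|>1$ --- the \emph{large} zeros --- and none on $|z|=1$.

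Next I would pin down the moduli of the zeros by a one-step bootstrap. If $P(\alpha)=0$ with $|\alpha|<1$, then $|a|\,|\alpha|^{m}=|\alpha^{n}+b|\leq2$, so $|\alpha|^{m}<2/|a|$; feeding $|\alpha|^{n}<(2/|a|)^{n/m}$ back into $|a|\,|\alpha|^{m}=|\alpha^{n}+b|\in[1-|\alpha|^{n},1+|\alpha|^{n}]$ gives $|\alpha|^{m}\in[(1-\varepsilon_{1})/|a|,(1+\varepsilon_{1})/|a|]$ with $\varepsilon_{1}=(2/|a|)^{n/m}<2/|a|$. If $P(\alpha)=0$ with $|\alpha|>1$, then $\alpha^{m}(\alpha^{n-m}+a)=-b$ forces $|\alpha^{n-m}+a|=|\alpha|^{-m}<1$, hence $|\alpha|^{n-m}\in(|a|-1,|a|+1)$; a second pass gives $|\alpha|^{n-m}\in[|a|(1-\varepsilon_{2}),|a|(1+\varepsilon_{2})]$ with $\varepsilon_{2}=|a|^{-1}(|a|-1)^{-m/(n-m)}<1/|a|$.

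Now let $f$ have $j$ small and $k$ large zeros with multiplicity, $j+k=\deg f$, and split $|f(0)|=\prod|\alpha_{i}|$ as the product of its factors over the small zeros of $f$ and over the large zeros. If $j=0$ and $k\geq1$, the large factor already exceeds $(|a|-1)^{1/(n-m)}>1$; if $k=0$ and $j\geq1$, the small factor lies below $((1+\varepsilon_{1})/|a|)^{1/m}<1$; both contradict $|f(0)|=1$. So $j,k\geq1$, and the same applied to $g$ forces $j\leq m-1$ and $k\leq n-m-1$. If $m=1$ or $m=n-1$ these inequalities are incompatible --- a contradiction. Otherwise $2\leq m\leq n-2$ with $\gcd(m,n-m)=\gcd(m,n)=1$, so $\Delta:=j/m-k/(n-m)$ is a nonzero rational with $|\Delta|\geq1/(m(n-m))$; after possibly interchanging $f$ and $g$ (which replaces $\Delta$ by $-\Delta$) we may take $\Delta>0$.

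Finally, the localisations give
\[
1=|f(0)|\leq\Bigl(\frac{1+\varepsilon_{1}}{|a|}\Bigr)^{j/m}\bigl(|a|(1+\varepsilon_{2})\bigr)^{k/(n-m)}=(1+\varepsilon_{1})^{j/m}(1+\varepsilon_{2})^{k/(n-m)}\,|a|^{-\Delta},
\]
and since $0<j/m,\,k/(n-m)<1$ this yields $|a|^{1/(m(n-m))}\leq|a|^{\Delta}\leq(1+\varepsilon_{1})(1+\varepsilon_{2})<e^{3/|a|}$, hence $|a|\ln|a|<3m(n-m)\leq3n^{2}/4$. As $t\mapsto t\ln t$ is increasing and $|a|\geq n^{2}/3$, this forces $\ln(n^{2}/3)<9/4$, so $n\leq5$; since the present case $2\leq m\leq n-2$ with $\gcd(m,n)=1$ is vacuous for $n\leq4$, only $n=5$ with $m\in\{2,3\}$ remains, and there $|a|\geq9$ gives $|a|^{1/6}\geq9^{1/6}>1.44>(1+\varepsilon_{1})(1+\varepsilon_{2})$, a final contradiction. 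The crux is the second step: the bootstrapped bounds on the zeros must be tight enough for $|a|\ln|a|<3m(n-m)$ to genuinely contradict $|a|\geq n^{2}/3$, with the handful of small $n$ mopped up by hand.
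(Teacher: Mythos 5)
Your proof is correct, but it takes a genuinely different route from the paper. The paper's proof is a two-line application of Schinzel's classification of reducible trinomials (Lemma~\ref{lem:irreduc}, quoted from \cite{Sch}): with the outer coefficients equal to $\pm 1$, conditions (c) and (d) are killed by $\gcd(m,n)=1$, condition (a) forces $|a|\leq 2$, and condition (b) forces $|a|\leq \frac{2m(n-m)}{\log(2m(n-m))}<0.321\,n^{2}$, which is incompatible with $|a|\geq n^{2}/3$. You instead give a self-contained analytic argument in the spirit of Perron's original method: Rouch\'e's theorem splits the zeros into $m$ small and $n-m$ large ones, a two-pass bootstrap pins $|\alpha|^{m}$ near $1/|a|$ (resp.\ $|\alpha|^{n-m}$ near $|a|$) up to factors $1+O(1/|a|)$, and the condition $|f(0)|=|g(0)|=1$ combined with $\gcd(m,n)=1$ produces the Diophantine obstruction $|\Delta|\geq 1/(m(n-m))$, yielding $|a|\ln|a|<3m(n-m)\leq 3n^{2}/4$ --- quantitatively the same shape as Schinzel's condition (b), which is no accident. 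I checked the details: the Rouch\'e step needs only $|a|\geq 3$; the claim $j(n-m)\neq km$ does follow from $\gcd(m,n-m)=1$ and $1\leq j\leq m-1$; the WLOG $\Delta>0$ is legitimate since swapping $f$ and $g$ negates $\Delta$ and both factors satisfy the same hypotheses; and the endgame $n\leq 5$, $m(n-m)=6$, $|a|\geq 9$, $9^{1/6}>e^{1/3}$ closes correctly. What your approach buys is independence from the (deep) theorem of \cite{Sch} and an explicit mechanism one could push further; what the paper's buys is brevity and a slightly sharper constant via the $\log$ in the denominator of Schinzel's bound.
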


In fact, from calculations we have performed, it appears that the following
much stronger conjecture is true. As in Perron's result, we appear to have
irreducibility once $|a|$ is greater than a small absolute constant regardless
of the degree of the middle term.

\begin{Con}
\label{conj:irreduc}
If $a$, $m$ and $n$ are integers satisfying $n \geq 3$, $0<m<n$, $\gcd(m,n)=1$
and $|a| \geq 5$, then $x^{n}+ax^{m}\pm 1$ is irreducible over $\bbQ$.

Furthermore, there are only finitely many reducible polynomials
of the form $x^{n}+ax^{m}\pm 1$ with $|a|=3$, $4$.
They are $x^{8} \pm 3x^{3}-1$, $x^{8} \pm 3x^{5}-1$,
$x^{13}+3x^{4}-1$, $x^{13}-3x^{4}+1$,
$x^{13}-3x^{6}-1$, $x^{13}+3x^{6}+1$,
$x^{13}+3x^{7} \pm 1$,
$x^{13}-3x^{9} \pm 1$,
$x^{14} \pm 4x^{5}-1$ and $x^{14} \pm 4x^{9}-1$.
\end{Con}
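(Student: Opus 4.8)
Since the final statement is recorded as a conjecture, what follows is a strategy, with an indication of where it falls short of a proof. Write $P = x^{n}+ax^{m}+b$ with $b\in\{1,-1\}$. By Gauss's lemma a proper factorisation of $P$ may be taken as $P=fg$ with $f,g\in\bbZ[x]$ monic and $f(0)g(0)=\pm1$, so $f(0),g(0)\in\{1,-1\}$. Since $|a|\geq 3>1+|b|$, no zero of $P$ lies on the unit circle, and Rouché's theorem on $|z|=1$ comparing $P$ with $az^{m}$ (using $|z^{n}+b|\le 2<|a|$) shows that $P$ has exactly $m$ zeros in the open unit disc and $n-m$ outside it. If $\beta$ is a zero with $|\beta|<1$ and $\gamma$ a zero with $|\gamma|>1$, then from $\beta^{m}(\beta^{n-m}+a)=-b$ and $\gamma^{n-m}+a=-b\gamma^{-m}$ one gets $|\beta|^{m}\in((|a|+1)^{-1},(|a|-1)^{-1})$ and $|\gamma|^{n-m}\in(|a|-1,|a|+1)$, and more precisely $\beta^{m}=-b\,a^{-1}(1+O(|a|^{-1-(n-m)/m}))$, so the small zeros cluster near the $m$-th roots of $-b/a$. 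In particular $\mHgt(P)=\prod_{|z|>1}|z|\in(|a|-1,|a|+1)$.

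Suppose $f$ has $j$ zeros in the unit disc and $k=\deg f-j$ outside. Then $1=|f(0)|$ is the product of the moduli of all zeros of $f$, so the band estimates give $(|a|+1)^{-j/m}(|a|-1)^{k/(n-m)}<1<(|a|-1)^{-j/m}(|a|+1)^{k/(n-m)}$. Taking logarithms, $j/m$ and $k/(n-m)$ must agree to within the factor $\log(|a|+1)/\log(|a|-1)$, which tends to $1$ as $|a|\to\infty$; since two distinct such ratios differ by at least $1/(m(n-m))$, once $|a|$ exceeds an explicit bound depending on $n$ one forces $j/m=k/(n-m)$ exactly, whereupon $\gcd(m,n-m)=\gcd(m,n)=1$ and $0\le j\le m$ force $j\in\{0,m\}$, i.e.\ $f$ trivial. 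This is the mechanism behind Theorem~\ref{thm:irreduc}, and it only yields irreducibility once $|a|\gtrsim n^{2}$: to reach the uniform bound $|a|\geq 5$ the estimate is far too wasteful, because the error accumulated over the $\sim n$ zeros must be beaten by the tiny gap $|a|^{1/(m(n-m))}$, and no purely analytic control of the zeros achieves this for all $n$ simultaneously.

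The missing input must exploit that $P$ has only three nonzero coefficients. A first dividend: $P$ has no cyclotomic factor (no zeros on $|z|=1$) and in fact no reciprocal factor at all, since a reciprocal factor of $P$ also divides $x^{n}P(1/x)$, hence divides $\gcd(P,x^{n}P(1/x))$, which one computes to divide $x^{|n-2m|}\mp 1$, a product of cyclotomic polynomials — impossible unless $n=2m$, excluded by $\gcd(m,n)=1$. Hence every irreducible factor $h$ of $P$ is non-reciprocal, so Smyth's theorem gives $\mHgt(h)\ge\theta_{0}=1.324717\ldots$, and a factorisation of $P$ into $r$ irreducible factors yields $\theta_{0}^{r}\le\mHgt(P)<|a|+1$, so $r\le 4$ when $|a|=3$, $r\le 5$ when $|a|=4$ (and $r$ bounded for every fixed $a$). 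What remains, and is the crux, is to exclude $2\le r\le r_{0}$: one would combine the constant-term and band constraints with the coupled linear system recording the vanishing of the coefficients of $x,\dots,x^{m-1},x^{m+1},\dots,x^{n-1}$ in $f(x)g(x)$ — equivalently the Newton-polygon and resultant relations between $f$ and $g$ — in the spirit of the classical analyses of trinomial factorisations by Ljunggren and Schinzel, to show that for $n$ larger than an explicit $N_{0}(|a|)$ no admissible pair $f,g$ exists. Carrying this out uniformly in $n$ is the main obstacle, and the reason the assertion is only a conjecture.

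Finally, granting such an $N_{0}(|a|)$ for $|a|\in\{3,4\}$, the classification is finished by a finite check: for each $n\le N_{0}$, each $m$ with $0<m<n$ and $\gcd(m,n)=1$, and each sign, test $x^{n}+ax^{m}\pm1$ directly for a factor of degree at most $n/2$ with constant term $\pm1$ and Mahler measure below $|a|+1$. The resulting set of reducible trinomials is automatically closed under $P(x)\mapsto\pm x^{n}P(1/x)$ (again a trinomial of the same shape because $b=\pm1$, with $m$ replaced by $n-m$), and one verifies that the list in the statement is exactly this set — occurring, as it must, in reciprocal pairs such as $x^{8}\pm3x^{3}-1\leftrightarrow x^{8}\pm3x^{5}-1$ and $x^{14}\pm4x^{5}-1\leftrightarrow x^{14}\pm4x^{9}-1$.
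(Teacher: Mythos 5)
The statement you were given is labelled a \emph{Conjecture} in the paper, and the paper contains no proof of it: the authors offer only the remark that computations support it. So the first thing to say is that you judged the situation correctly --- your submission is explicitly a strategy, not a proof, and the step you single out as missing (an explicit $N_{0}(|a|)$, uniform in $n$ for each fixed $|a|$, beyond which no factorisation into $2\le r\le r_{0}$ non-reciprocal factors is possible) is precisely the open content of the conjecture; the concluding finite verification for $n\le N_{0}$ is conditional on that bound, so nothing is established unconditionally. The ingredients you do work out are correct and constitute sensible partial progress: for $|a|\ge 3$, Rouch\'e's theorem on $|z|=1$ against $az^{m}$ places $m$ zeros inside and $n-m$ outside the unit circle; the identities $\beta^{m}\left(\beta^{n-m}+a\right)=-b$ and $\gamma^{n-m}+a=-b\gamma^{-m}$ give the stated modulus bands, hence $\mHgt(P)\in(|a|-1,|a|+1)$; the computation of $\gcd\bigl(P(x),x^{n}P(1/x)\bigr)$ does exclude reciprocal (in particular cyclotomic) factors, since $P$ has no zero on the unit circle; and Smyth's theorem then caps the number of irreducible factors at $4$ for $|a|=3$ and $5$ for $|a|=4$.

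Two smaller corrections. First, Theorem~\ref{thm:irreduc} is not proved in the paper by your constant-term/band mechanism: it is deduced from Schinzel's theorem on reducible trinomials (Lemma~\ref{lem:irreduc}), whose condition~(b) yields $|a|<0.321\,n^{2}$; your analytic argument gives a bound of comparable order (roughly $n^{2}/\log n$), so the attribution is harmless but inaccurate. Second, your final paragraph asserts that the trinomials listed in the conjecture ``are exactly this set'': as written that is a computation you have not carried out, and without the existence of $N_{0}(|a|)$ it could not be completed in finite time; so even the classification for $|a|=3,4$ remains, exactly as in the paper, conjectural.
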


\begin{Rem}
Note that the condition that $\gcd(m,n)=1$ in both Theorem~\ref{thm:irreduc}
and Conjecture~\ref{conj:irreduc} is necessary. Examples from
Bremner \cite{Br} like $x^{33}+67x^{11}+1=\left( x^{3}+x+1 \right) \left(
x^{30}-\cdots -1 \right)$ in his Theorem on pages 153--154 and
$x^{6}+ \left( 4\mu^{4}-4\mu \right)x^{2}-1
= \left( x^{3}+2\mu x^{2}+2\mu^{2}x+1 \right)
\left( x^{3}-2\mu x^{2}+2\mu^{2}x-1 \right)$ for integers $\mu \neq 0,1$
in his Postscript on page~154 demonstrate this.
\end{Rem}


\subsection{Mahler Measure of trinomials $z^{n}+az^{m}+b \in \bbC[z]$}
\label{subsec:Mahler}

The {\it Mahler measure} of a polynomial $\d P(z)=a_{0}z^{n}+ \cdots +a_{n} = a_{0} \prod_{j=1}^{n} \left( z- \alpha_{j} \right) \in \bbC[z]$,
with $a_{0} \neq 0$, as defined by D. H. Lehmer \cite{L} in 1933,
is
\[
\mHgt(P)= \left| a_{0} \right| \prod_{j=1}^{n} \max \left( 1, \left| \alpha_{j} \right| \right).
\]

\begin{Rem-nonum}
Let $\alpha$ be a nonzero algebraic number. The {\it Mahler measure} of $\alpha$ is the Mahler measure of its minimal polynomial.
\end{Rem-nonum}

From a result of Kronecker, we know that, if $P$ is the minimal polynomial of an
algebraic integer then $M(P)=1$ if and only if $P$ is a cyclotomic polynomial.
D. H. Lehmer asked: does there exist a constant $c_{0} >0$ such that $M(P) > 1+ c_{0}$
for all $P$ not cyclotomic? The smallest known Mahler measure was found by
D.~H. Lehmer himself. It is the Mahler measure of the polynomial
$z^{10}+z^{9}+z^{7}-z^{6}-z^{5}-z^{4}-z^{3}+z+1$ for which the Mahler measure
is $1.176280\ldots$.

A polynomial $P$ is {\it reciprocal} if $z^{n}P( 1/z) = P(z)$ and an algebraic
number is {\it reciprocal} if its minimal polynomial is reciprocal. As stated
above, C.J. Smyth \cite {S1} solved Lehmer's problem when $P$ is not reciprocal.
In the general case, the
best known result is due to the second author \cite{V}, who proved that if $\alpha$ is
an algebraic integer of degree $n \geq 2$ and not a root of unity, then
\[
\mHgt(\alpha) \geq 1+ \frac{1}{4} \left( \frac{\log \log n}{\log n} \right)^{3}.
\]


In 2013, the first author \cite{F1} studied the Mahler measure of trinomials of
height $1$ and gave two criteria to identify those trinomials whose Mahler measure
is less than $1.381356\ldots=\mHgt \left( 1+z_{1}+z_{2} \right)$. In this same
work, she was able to prove a conjecture of Smyth on the Mahler measure of such
trinomials for $n$ sufficiently large compared to $m$. Stankov \cite{S}
was interested in trinomials of the type $z^{n}-az-1$ with $a \in (0,2]$ and he
presented the explicit expression by an integral of the limit of their Mahler
measure when $n$ tends to $\infty$. We generalise this result to any trinomial
of the form $z^{n}+az^{k}+b \in \bbC[z]$ in Theorem~\ref{thm:4}. In 2016,
J-L. Verger-Gaugry \cite{VG} studied
the family of trinomials $z^{n}+z-1$ for which he gave the asymptotic expansion
of the Mahler measure as a function of $n$ only. Here, we prove the following
results.


\begin{Th}
\label{thm:4}
Let $P(z)=z^{n}+az^{m}+b \in \bbC[z]$ with $ab \neq 0$.

\begin{enumerate}
\item
If $|a|-|b| \geq 1$, then $\d \lim_{n \rightarrow \infty} M(P) = |a|$.

\item
If $|b|-|a| \geq 1$, then $M(P) = |b|$.

\item
If $|a| + |b| \leq 1$, then $M(P)=1$.

\item
If $||a| -|b|| < 1 < |a|+ |b|$, then
\[
\lim_{n \rightarrow \infty} M(P) = \exp \left( \frac{1}{2 \pi} \int_{0}^{\gamma} \ln \left( |a|^{2}+2|ab| \cos (t) +|b|^{2} \right) dt \right),
\]
where we put $\gamma=\arccos \left( \dfrac{1-|a|^{2}-|b|^{2}}{2|ab|} \right)$.
\end{enumerate}
\end{Th}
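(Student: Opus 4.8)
The plan is to establish parts (b) and (c) directly, by locating every root of $P$ relative to the unit circle, and to establish parts (a) and (d) by feeding the identity $P(z)=F(z^{n-m},z^{m})$, where $F(x,y)=xy+ay+b$, into Lawton's theorem on the behaviour of Mahler measures under monomial substitutions. For (b): if $\alpha$ is a root of $P$ with $|\alpha|<1$, then $|b|=|\alpha^{n}+a\alpha^{m}|\le|\alpha|^{n}+|a|\,|\alpha|^{m}<1+|a|\le|b|$, which is absurd; hence every root satisfies $|\alpha_{j}|\ge 1$, and since $\prod_{j}\alpha_{j}=(-1)^{n}b$ we get $M(P)=\prod_{j}\max(1,|\alpha_{j}|)=\prod_{j}|\alpha_{j}|=|b|$ — in fact for \emph{every} admissible $n$, which is stronger than the stated limit. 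For (c) (where $b\ne 0$ forces $|a|<1$): if $\alpha$ is a root with $|\alpha|=r>1$, then $r^{n}=|a\alpha^{m}+b|\le|a|r^{m}+|b|$, whereas $r^{n}-|a|r^{m}-|b|=r^{m}\bigl(r^{n-m}-|a|\bigr)-|b|>(1-|a|)-|b|\ge 0$ because $r>1$ and $n-m\ge 1$; this contradiction shows all roots lie in $|z|\le 1$, so $M(P)=1$, again an exact identity rather than just a limit.

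For (a) and (d): since $0<m<n$ are coprime we have $\gcd(n-m,m)=1$, so the integer vectors orthogonal to $(n-m,m)$ are exactly the integer multiples of $(m,\,-(n-m))$, and Lawton's quantity satisfies $q(n-m,m)\ge n/2\to\infty$. Lawton's theorem then gives
\[
\lim_{n\to\infty}M(P)=M(F)=\exp\!\left(\frac{1}{(2\pi)^{2}}\int_{0}^{2\pi}\!\int_{0}^{2\pi}\log\bigl|e^{i(u+v)}+ae^{iv}+b\bigr|\,du\,dv\right).
\]
Evaluating the inner integral first: for fixed $v$ the substitution $w=u+v$ turns the integrand into $\log\bigl|e^{iw}+(ae^{iv}+b)\bigr|$, and Jensen's formula for this monic linear polynomial in $e^{iw}$ gives inner value $\log\max\bigl(1,|ae^{iv}+b|\bigr)$, hence
\[
\lim_{n\to\infty}M(P)=\exp\!\left(\frac{1}{2\pi}\int_{0}^{2\pi}\log\max\bigl(1,\,|ae^{iv}+b|\bigr)\,dv\right).
\]

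It remains to evaluate this one-dimensional integral, using $|ae^{iv}+b|^{2}=|a|^{2}+2|ab|\cos t+|b|^{2}$ with $t=v+\arg a-\arg b$. If $|a|-|b|\ge 1$, then $|ae^{iv}+b|\ge 1$ for all $v$, so the maximum is $|ae^{iv}+b|$ and the integral equals $\frac{1}{2\pi}\int_{0}^{2\pi}\log|ae^{iv}+b|\,dv=\log M(az+b)=\log|a|$, proving (a). (The same computation recovers (c): there $|ae^{iv}+b|\le 1$ everywhere, the integrand is $0$, and the limit is $1$.) For (d), the hypothesis $\bigl||a|-|b|\bigr|<1<|a|+|b|$ is precisely what places $(1-|a|^{2}-|b|^{2})/(2|ab|)$ in $(-1,1)$, so that $\gamma\in(0,\pi)$ is well defined; moreover $|ae^{iv}+b|\ge 1$ exactly when $\cos t\ge(1-|a|^{2}-|b|^{2})/(2|ab|)$, i.e.\ when $|t|\le\gamma$, and there $\log\max(1,|ae^{iv}+b|)=\tfrac12\log\bigl(|a|^{2}+2|ab|\cos t+|b|^{2}\bigr)$; folding $[-\gamma,\gamma]$ onto $[0,\gamma]$ using the evenness of $\cos$ then yields exactly the asserted expression.

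The step I expect to be the main obstacle is the passage from the one-variable integral $\frac{1}{2\pi}\int_{0}^{2\pi}\log|e^{in\theta}+ae^{im\theta}+b|\,d\theta$ to the two-variable one, i.e.\ the assertion that the curve $\theta\mapsto\bigl((n-m)\theta,\,m\theta\bigr)\bmod 2\pi$ on the torus ``fills out'' sufficiently as $n\to\infty$; this is exactly the content of Lawton's theorem, and the key input that makes it apply is the bound $q(n-m,m)\ge n/2$, which relies on $\gcd(m,n)=1$ in an essential way — without coprimality the limit can genuinely differ from the value claimed, as $M(z^{2k}+3z^{k}+1)=M(z^{2}+3z+1)$ (independent of $k$) shows. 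A secondary technical point is that $F(e^{iu},e^{iv})$ may vanish on the torus — precisely in the regime of (d) and on the boundary cases $|a|-|b|=1$ and $|a|+|b|=1$ — so one must invoke the form of Lawton's theorem that tolerates integrable logarithmic singularities of $\log|F|$, or else dispose of those boundary parameter values by a continuity argument from the strict cases.
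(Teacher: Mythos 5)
Your proof is correct, but for parts (a) and (d) it takes a genuinely different route to the key one-variable integral. The paper handles (b) and (c) by Rouch\'e's theorem applied to $bz^{n}+az^{n-m}+1$ (resp.\ $z^{n}+b$ versus $az^{m}$), with a separate $\varepsilon$-perturbation for the boundary cases $|b|-|a|=1$ and $|a|+|b|=1$; your direct inequalities on a hypothetical root of modulus $<1$ (resp.\ $>1$) reach the same exact identities $M(P)=|b|$ and $M(P)=1$ more cheaply and with no boundary-case split. For (a) and (d), the paper invokes Boyd's limit theorem $\lim_{n}M(F(z,z^{n}))=M(F(z,w))$ with $F(z,w)=w+az^{m}+b$ ($m$ fixed), applies Jensen in $w$, and then shows the extra winding $t\mapsto mt$ does not change the integral; you instead write $P(z)=F(z^{n-m},z^{m})$ with $F(x,y)=xy+ay+b$ and apply Lawton's theorem, using $\gcd(n-m,m)=1$ to get $q(n-m,m)=\max(m,n-m)\ge n/2\to\infty$. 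Both paths land on $\exp\bigl(\tfrac{1}{2\pi}\int_{0}^{2\pi}\log\max(1,|ae^{iv}+b|)\,dv\bigr)$, after which your evaluation of (a) and (d) coincides with the paper's. Your version buys uniformity in $m$ (the limit holds even if $m$ varies with $n$, provided $\gcd(m,n)=1$), whereas the paper's Boyd-based argument is for fixed $m$; conversely, your closing remark slightly overstates the role of coprimality --- for fixed $m$ one has $q(n-m,m)\ge (n-m)/m\to\infty$ regardless of $\gcd(m,n)$, and your counterexample $z^{2k}+3z^{k}+1$ only bites because there $m=k$ grows with $n$. You are also right that the form of Lawton's theorem allowing $F$ to vanish on the torus is needed in case (d) and at the boundary parameter values; the paper sidesteps this by treating those cases with Rouch\'e rather than with the limit formula.
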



As a result of the following theorem, we have exact expressions for $M(P)$ in
all cases in Theorem~\ref{thm:4} except (d).

\begin{Th}
\label{thm:VF-2}
Let $a$ and $b \neq 0$ be fixed complex numbers with $|a|-|b| \geq 1$.
For all integers $m$ and $n$ with $0<m<n$ and $\gcd(m,n)=1$,
we have
\begin{equation}
\label{eq:mahler-S}
\log M \left( z^{n}+az^{m}+b \right)
= \log |a| - \sum_{k \geq 1} \frac{1}{km} (-1)^{kn} \binom{kn-1}{km-1} \Real \left( b^{-km} (b/a)^{kn} \right).
\end{equation}
\end{Th}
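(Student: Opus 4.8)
The plan is to compute $\log M(z^n + az^m + b)$ directly via Jensen's formula by expanding the integrand as a convergent power series. Write $P(z) = z^n + az^m + b$. Under the hypothesis $|a| - |b| \geq 1$, I claim the zeros of $P$ split into $m$ zeros of modulus $< 1$ and $n-m$ zeros of modulus $> 1$ (none on the unit circle); this should follow from a Rouché-type argument comparing $az^m$ with $z^n + b$ on a suitable circle, using $|a| \geq |b| + 1$. Then $M(P) = \prod_{|\alpha_j| > 1} |\alpha_j|$, and since the product of all zeros has modulus $|b|$, we get $M(P) = |b| / \prod_{|\alpha_j| < 1} |\alpha_j|$. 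Alternatively, and more usefully for getting the series, apply Jensen: $\log M(P) = \frac{1}{2\pi}\int_0^{2\pi} \log|P(e^{i\theta})|\, d\theta$.

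Next I would factor out the dominant term $az^m$ on the unit circle: for $|z| = 1$,
\[
P(z) = az^m\left( 1 + \frac{z^{n-m}}{a} + \frac{b}{a z^m} \right),
\]
so $\log|P(e^{i\theta})| = \log|a| + \Real \log\bigl( 1 + a^{-1}z^{n-m} + a^{-1}b z^{-m}\bigr)$ with $z = e^{i\theta}$. The key point is that the argument $w := a^{-1}z^{n-m} + a^{-1}b z^{-m}$ satisfies $|w| \leq (1 + |b|)/|a| < 1$ uniformly on the circle (here we need the \emph{strict} inequality, which holds unless $|a| = |b| + 1$; that boundary case needs a limiting argument or a direct check that $w \neq -1$). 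Hence $\log(1 + w) = -\sum_{j \geq 1} \frac{(-1)^j}{j} w^j$ converges uniformly, and I can integrate term by term:
\[
\log M(P) = \log|a| - \sum_{j \geq 1} \frac{(-1)^j}{j}\, \Real\left( \frac{1}{2\pi}\int_0^{2\pi} \bigl( a^{-1}e^{i(n-m)\theta} + a^{-1}b e^{-im\theta}\bigr)^j d\theta \right).
\]

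The remaining task is to evaluate the Fourier integrals. Expanding the $j$-th power by the binomial theorem gives terms $\binom{j}{\ell} a^{-j} b^{\ell} e^{i[(n-m)(j-\ell) - m\ell]\theta}$, and $\frac{1}{2\pi}\int_0^{2\pi} e^{ir\theta}d\theta$ is $1$ if $r = 0$ and $0$ otherwise. So only the terms with $(n-m)(j-\ell) = m\ell$, i.e. $(n-m)j = n\ell$, survive. Since $\gcd(m,n) = 1$ forces $\gcd(n-m, n) = 1$, this means $n \mid j$; writing $j = kn$ gives $\ell = k(n-m)$, and the surviving coefficient is $\binom{kn}{k(n-m)} a^{-kn} b^{k(n-m)}$. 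Collecting, with $\frac{(-1)^j}{j} = \frac{(-1)^{kn}}{kn}$ and $a^{-kn} b^{k(n-m)} = (b/a)^{kn} b^{-km}$, and using the identity $\frac{1}{kn}\binom{kn}{k(n-m)} = \frac{1}{km}\binom{kn-1}{km-1}$ (a standard manipulation of binomial coefficients, since $\binom{kn}{km} \frac{km}{kn} = \binom{kn-1}{km-1}$), yields exactly \eqref{eq:mahler-S}.

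The main obstacle is twofold: first, carefully justifying the interchange of summation and integration (uniform convergence on $|z|=1$, which is why the strict bound $|w| < 1$ matters, and handling the boundary case $|a| = |b| + 1$ separately — here $w = -1$ would require $z^{n-m} = -1$ and $bz^{-m} \cdot \overline{\text{(sign)}}$ aligned, which one checks is impossible for coprime $m, n$ unless trivially, or else one obtains \eqref{eq:mahler-S} as a limit of the strict case by continuity of both sides in $a$); and second, cleanly extracting the resonance condition $n \mid j$ from $\gcd(m,n)=1$ and performing the binomial bookkeeping without sign or index errors. Everything else is routine.
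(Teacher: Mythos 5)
Your argument is correct and reaches \eqref{eq:mahler-S} by a genuinely different route from the paper's. Both proofs start from Jensen's formula and expand a logarithm of the integrand in a power series, but the paper treats $-az^{m}-b$ as the dominant part: it writes the integrand as $\log\left|1-e^{int}/(-ae^{imt}-b)\right|$ (after a second application of Jensen to get $\frac{1}{2\pi}\int_{0}^{2\pi}\log\left|-ae^{imt}-b\right|dt=\log|a|$) and then evaluates each Fourier coefficient $I_{k}=\int_{0}^{2\pi}e^{inkt}(-ae^{imt}-b)^{-k}dt$ by contour integration, namely via the residue at infinity of $z^{kn-1}(-az^{m}-b)^{-k}$ and a negative binomial expansion, the divisibility condition appearing there as $m\mid k$. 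You instead factor out only $az^{m}$, expand $\log\left(1+a^{-1}z^{n-m}+a^{-1}bz^{-m}\right)$, and kill the off-resonance terms by plain orthogonality of $e^{ir\theta}$, the condition $n\mid j$ emerging from $\gcd(n-m,n)=1$; after the reindexing $j=kn$ and the identity $\frac{1}{kn}\binom{kn}{km}=\frac{1}{km}\binom{kn-1}{km-1}$ this is exactly \eqref{eq:mahler-S}. Your version avoids residues and the auxiliary Jensen evaluation, at the cost of a double (binomial) expansion; both versions face the identical analytic difficulty of interchanging sum and integral when $|a|-|b|=1$, to which the paper devotes Lemma~\ref{lem:fubini} (and even then only completes the real case in detail). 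One correction to your discussion of that boundary case: $1+w=0$, i.e.\ a zero of $P$ on the unit circle, is certainly possible for coprime $m$, $n$ (take $a=-2$, $b=1$, so $P(1)=0$), so it cannot be dismissed as impossible; but your fallback of obtaining the boundary case as a limit in $a$ of the strict case does work, provided you also check that the right-hand side of \eqref{eq:mahler-S} converges, uniformly down to $|a|=|b|+1$, which follows from Stirling's estimate for $\binom{kn}{km}|b|^{k(n-m)}|a|^{-kn}$.
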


This behaviour is quite different from what happens when $a, b=\pm 1$.
In 2007, Duke \cite{Duk} showed, for $0<m<n$ with $\gcd(m,n)=1$, that
\[
\log M \left( z^{n}+z^{m}+1 \right) = \log M(x+y+1)+\frac{c(n,m)}{n^{2}}
+ O \left( \frac{m}{n^{3}} \right),
\]
where $c(n,m)=-\pi \sqrt{3}/6$ if $3$ divides $m+n$ and $c(n,m)=\pi \sqrt{3}/18$
otherwise. The first author \cite{F1} proved that similar results hold for the other
trinomials with $\pm 1$ as their coefficients and used them as mentioned above.


\subsection{House of trinomials $z^{n}+az^{m} \pm 1 \in \bbZ[z]$}
\label{subsec:house}

Let $\alpha$ be a nonzero algebraic integer of degree $n$, with conjugates
$\a_{1}=\a,\ldots,\a_{n}$ and minimal polynomial, $P$. The {\it house} of $\a$
(and of $P$) is defined by:
\[
\maison{\a}=\d \max_{1 \leq i \leq n} \left| \a_{i} \right|.
\]

We have the inequality: $\maison{\a} \geq \mHgt(\a)^{1/n}$. In 1965, A. Schinzel
and H. Zassenhaus \cite{SZ} conjectured that there exists a constant $c>0$ such
that if $\a$ is not a root of unity then $\maison{\a} \geq 1 + c/n$. Thanks to
the polynomial $P(x)= x^{n}-2$, we see that $c \leq \log(2)$. In 1985, a result
of C.J. Smyth \cite{S1} led D. Boyd \cite{B1} to conjecture that $c$ should be
equal to $3/2 \log \t_{0}$ where $\t_{0}=1.324717\ldots$, and that this value
is attained too, via the polynomial $x^{3n}+x^{2n}-1$. The second author \cite{V}
proved that, if $\a$ is an algebraic integer of degree $n \geq 3$, not a root
of unity, then
\[
\maison{\alpha} \geq 1+\frac{1}{2n} ( \log \log n / \log n)^{3}.
\]

In 1991, E.M. Matveev \cite{M} proved that, if $\a$ is an algebraic integer of
degree $n \geq 2$, not a root of unity, then $\maison{\a} \geq \exp \left( \log \left( n +0.5 \right) /n^{2} \right)$.
Until Dimitov's very recent result (see below), the best-known asymptotic result
was given by A. Dubickas \cite{Dub}:
\[
\maison{\alpha} > 1+\frac{1}{n} \left( 64/\pi^{2}-\epsilon \right) \left( \log\log n/\log n \right)^{3}
\quad \text{for } n>n_{0}(\epsilon).
\] 

In 2007, G. Rhin and Q. Wu \cite{RW2} verified the conjecture of Schinzel and
Zassenhaus with the constant of Boyd up to degree $28$. They also established
that, if $\a$ is an algebraic integer of degree $n \geq 4$, not a root of unity,
then $\maison{\a} \geq \exp \left( 3 \log (n/3)/n^{2} \right)$ for $n \leq 12$,
and $\maison{\a} \geq \exp \left( 3 \log (n/2)/n^{2} \right)$ for $n \geq 13$.
It appears that the
result of \cite{RW2} improves Matveev for $n \geq 6$. Very recently (Dec. 2019),
Dimitrov \cite{Di} has proven the Schinzel-Zassenhaus conjecture with $c=\log(2)/4$.
His Theorem~1 states that when $P(z) \in \bbZ[z]$ is a non-cyclotomic monic polynomial
of degree $n>1$ that is irreducible over $\bbQ$ and $\alpha$ is a zero of $P(z)$,
then
\[
\maison{\alpha} \geq 2^{1/(4n)}=1+\frac{\log(2)}{4n} + O \left( 1/n^{2} \right).
\]

For trinomials, let $n \geq 2$ and $\theta_{n}$ be the unique real zero in $(0,1)$
of the trinomial $z^{n}+z-1$. By his method of asymptotic expansions of the zeros
mentioned above, J-L. Verger-Gaugry \cite{VG} obtained a direct proof of the conjecture
of Schinzel-Zassenhaus for $\theta_{n}^{-1}$, proving that
\[
\maison{\theta_{n}^{-1}} > 1 + \frac{ (\log n) \left( 1 - \frac{ \log \log n}{\log n} \right)}{n}.
\]

Here we focus on trinomials of the form $z^{n}+az^{m} \pm 1$,
where $0<m<n$ are relatively prime integers and $a$ is a positive real number.
Upon
replacing $z$ by $-z$, we find that all such polynomials are of the form\\
$R_{n,m,a}(z)=z^{n}-az^{m}+1$ with $a>0$, $m$ odd and $n$ even;\\
$S_{n,m,a}(z)=z^{n}+az^{m}-1$ with $a>0$ and $n$ odd; or\\
$T_{n,m,a}(z)=z^{n}-az^{m}-1$ with $a>0$.\\

\begin{Th}
\label{thm:house-R}
For any positive real number $a \geq 2$ and relatively prime positive integers
$0<m<n$ with $m$ odd and $n$ even, $r_{n,m,a}(z)$ has a real root $r_{n,m,a}^{(1)} \geq 1$
and
\[
r_{n,m,a}^{(1)} \geq 1 + \frac{\log(a-1)}{n-m}.
\]
Therefore, if we restrict $a$ to be an integer with $a \geq 2$, we have
\[
\maison{R_{n,m,a}} \geq 1 + \frac{\log(a-1)}{n-m}.
\]
\end{Th}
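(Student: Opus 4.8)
The plan is to pin down a real root of $f(z):=z^{n}-az^{m}+1$ (this is $r_{n,m,a}(z)$, and it equals $R_{n,m,a}(z)$ when $a\in\bbZ$) to the right of the explicit point
\[
z_{1}:=1+\frac{\log(a-1)}{n-m},
\]
after which every such root automatically meets the claimed bound. Since $a\ge 2$ we have $\log(a-1)\ge 0$, so $z_{1}\ge 1$; and since $f$ is monic, $f(z)\to+\infty$ as $z\to+\infty$. Hence it suffices to show $f(z_{1})\le 0$: by the intermediate value theorem $f$ then has a real root $\rho$ in $[z_{1},+\infty)$, and $\rho\ge z_{1}\ge 1$. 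Taking $r_{n,m,a}^{(1)}$ to be this root (one may take the largest real root of $f$, which is $\ge 1$ because $f(1)=2-a\le 0$) gives the first inequality. When $a$ is an integer, $R_{n,m,a}\in\bbZ[z]$ is monic with $\rho$ among its zeros, so $\maison{R_{n,m,a}}\ge|\rho|=\rho$, which gives the second inequality.

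It remains to verify $f(z_{1})\le 0$. Dividing by $z_{1}^{m}>0$, this is equivalent to
\[
a\ \ge\ z_{1}^{\,n-m}+z_{1}^{-m}.
\]
Write $\delta=\frac{\log(a-1)}{n-m}$, so $z_{1}=1+\delta$ and $(n-m)\delta=\log(a-1)$. From $1+\delta\le e^{\delta}$ we get $z_{1}^{\,n-m}=(1+\delta)^{n-m}\le e^{(n-m)\delta}=a-1$, while $z_{1}\ge 1$ gives $z_{1}^{-m}\le 1$. Adding these, $z_{1}^{\,n-m}+z_{1}^{-m}\le (a-1)+1=a$, which is exactly what is needed; hence $f(z_{1})\le 0$ and the theorem follows. (When $a=2$ one has $z_{1}=1$, $f(1)=0$, and the bound reads $r_{n,m,a}^{(1)}\ge 1$, which is trivial; the coprimality hypothesis $\gcd(m,n)=1$ plays no role in this argument but is retained for consistency with the surrounding results.)

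There is no real obstacle here: once the test point $z_{1}$ is chosen, the proof is a two-line estimate. The only point deserving emphasis is the calibration of that point — the exponential bound $(1+\delta)^{n-m}\le e^{(n-m)\delta}=a-1$ is arranged to leave exactly the slack $z_{1}^{-m}\le 1$ needed to swallow the constant term $+1$ of the trinomial, which is why $\log(a-1)$, rather than $\log a$, appears in the numerator. A secondary and purely bookkeeping step is the last one: $\rho$ is an algebraic integer whose minimal polynomial divides $R_{n,m,a}$, so the house of that minimal polynomial — and therefore $\maison{R_{n,m,a}}$ — is at least $\rho$.
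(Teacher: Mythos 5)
Your proof is correct. It is the same estimate as the paper's, but run in the opposite direction, and it is worth noting the difference in organization. The paper first invokes its Lemma~\ref{lem:rt-location}(a) (a Descartes'-rule analysis) to produce the real zero $r_{n,m,a}^{(1)}=1+t\geq 1$, then manipulates the equation it satisfies: adding $(1+t)^{m}\geq 1$ to $(1+t)^{n}-a(1+t)^{m}+1=0$ gives $(1+t)^{n-m}\geq a-1$, whence $1+t\geq\exp\left(\log(a-1)/(n-m)\right)\geq 1+\log(a-1)/(n-m)$ by the Taylor expansion of $\exp$. You instead evaluate the trinomial at the explicit test point $z_{1}=1+\log(a-1)/(n-m)$, show $f(z_{1})\leq 0$ via $(1+\delta)^{n-m}\leq e^{(n-m)\delta}=a-1$ and $z_{1}^{-m}\leq 1$, and conclude by the intermediate value theorem. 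The two arguments hinge on exactly the same two facts (the inequality $1+x\leq e^{x}$ and the absorption of the constant term $+1$ by a trivial bound on a power of $z_{1}$), but yours is self-contained: it needs neither the zero-location lemma nor the intermediate quantity $t_{0}$, at the modest cost of not identifying the root as one of exactly two real zeros. Your handling of the final step (the minimal polynomial of $\rho$ divides the monic $R_{n,m,a}\in\bbZ[z]$, so $\maison{R_{n,m,a}}\geq\rho$) matches what the paper leaves implicit.
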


\begin{Th}
\label{thm:house-S}
For any positive real number $a \geq 2$ and relatively prime positive integers
$0<m<n$ with $m$ even and $n$ odd, $S_{n,m,a}(z)$ has a real root $s_{n,m,a}^{(3)} \leq -1$
and
\[
\left| s_{n,m,a}^{(3)} \right| \geq 1 + \frac{\log(a-1)}{n-m}.
\]

If we restrict $a$ to be an integer with $a \geq 2$, we have
\[
\maison{S_{n,m,a}} \geq 1 + \frac{\log(a-1)}{n-m}.
\]
\end{Th}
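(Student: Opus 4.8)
The plan is to reduce the statement to a one‑variable real‑analysis argument, essentially identical to the proof of Theorem~\ref{thm:house-R}. Replacing $z$ by $-x$ and using that $n$ is odd and $m$ is even, one gets $S_{n,m,a}(-x) = -x^{n}+ax^{m}-1 = -\bigl( x^{n}-ax^{m}+1 \bigr)$; hence exhibiting a real root $s_{n,m,a}^{(3)}\le -1$ of $S_{n,m,a}$ with $\bigl| s_{n,m,a}^{(3)} \bigr|$ large is equivalent to exhibiting a real root $x^{\ast}\ge 1$ of $g(x):=x^{n}-ax^{m}+1$ that is large, and then putting $s_{n,m,a}^{(3)}=-x^{\ast}$. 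Note that $g(1)=2-a\le 0$ and $g(x)\to+\infty$ as $x\to+\infty$, so such a root already exists; it remains to push it out to the desired size.

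For the quantitative part I would set $\delta=\dfrac{\log(a-1)}{n-m}$, which is $\ge 0$ since $a\ge 2$, and $x_{1}=1+\delta$. The elementary inequality $\log(1+\delta)\le\delta$ gives $(n-m)\log x_{1}\le(n-m)\delta=\log(a-1)$, hence $x_{1}^{\,n-m}\le a-1$. Therefore
\[
g(x_{1}) = x_{1}^{m}\bigl( x_{1}^{\,n-m}-a \bigr)+1 \ \le\ x_{1}^{m}\bigl( (a-1)-a \bigr)+1 \ =\ 1-x_{1}^{m} \ \le\ 0,
\]
the last step because $x_{1}\ge 1$. Since $g$ is a real polynomial with $g(x_{1})\le 0$ and $g(x)\to+\infty$, the intermediate value theorem yields a real root $x^{\ast}\ge x_{1}$ of $g$. (When $a=2$ one has $\delta=0$, $x_{1}=1$ and $g(1)=0$, so $x^{\ast}=1$ and the bound is an equality, consistent with the statement.)

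Consequently $s_{n,m,a}^{(3)}:=-x^{\ast}$ is a real root of $S_{n,m,a}$ with $s_{n,m,a}^{(3)}\le -1$ and $\bigl| s_{n,m,a}^{(3)} \bigr| = x^{\ast}\ge x_{1}=1+\dfrac{\log(a-1)}{n-m}$, which is the first assertion; and if $a$ is an integer with $a\ge 2$ then $s_{n,m,a}^{(3)}$ is an algebraic integer occurring among the roots of the monic integer polynomial $S_{n,m,a}$, whence $\maison{S_{n,m,a}}\ge\bigl| s_{n,m,a}^{(3)} \bigr|\ge 1+\dfrac{\log(a-1)}{n-m}$. There is no real obstacle in this argument: the only points requiring care are the parity bookkeeping in the substitution $z\mapsto -x$ (so that the constant term and the middle term survive with the signs making $g(1)=2-a\le 0$), and the single genuine inequality $x_{1}^{\,n-m}\le a-1$, which is exactly the assertion that at $x=x_{1}$ the middle term $-ax^{m}$ still outweighs the leading term $x^{n}$. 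In particular the hypothesis $\gcd(m,n)=1$ is not used here.
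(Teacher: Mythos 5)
Your proof is correct and follows essentially the same route as the paper's: both reduce to the real root of $x^{n}-ax^{m}+1$ beyond $1$ and the key inequality $x^{\,n-m}\geq a-1$ combined with $e^{u}\geq 1+u$ (equivalently $\log(1+\delta)\leq\delta$). The only cosmetic difference is that you run the argument forward (evaluate $g$ at the target point $1+\log(a-1)/(n-m)$ and invoke the intermediate value theorem), whereas the paper starts from the root guaranteed by its Lemma on the location of real zeros and solves the auxiliary equation $(1+t_{0})^{n-m}=a-1$; the content is identical.
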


Unfortunately, when $m$ and $n$ are both odd, the single real zero of $S_{n,m,a}$
is between $0$ and $1$, while $\maison{S_{n,m,a}}>1$, so we are unable to obtain
a non-trivial lower bound for $\maison{S_{n,m,a}}$ in this case.

\begin{Th}
\label{thm:house-T}
For any positive real number $a \geq 2$ and relatively prime positive integers
$0<m<n$, $T_{n,m,a}(z)$ has a real root $t_{n,m,a}^{(1)}>1$ and
\[
t_{n,m,a}^{(1)} > 1 + \frac{\log(a)}{n-m}.
\]

Therefore, if $a$ is restricted to be an integer with $a \geq 2$, we have
\[
\maison{T_{n,m,a}} > 1 + \frac{\log(a)}{n-m}.
\]
\end{Th}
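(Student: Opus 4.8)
The plan is to pin down a real root of $T_{n,m,a}(z)=z^{n}-az^{m}-1$ by a single well-chosen evaluation, the only analytic input being the elementary estimate $\log(1+y)<y$ for $y>0$ (equivalently $(1+t/k)^{k}<e^{t}$ for $t>0$, $k\geq 1$).

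First I would establish existence and uniqueness of the relevant root. Since $T_{n,m,a}(1)=1-a-1=-a<0$ and $T_{n,m,a}(z)\to+\infty$ as $z\to+\infty$, there is a real root in $(1,\infty)$. Moreover $T_{n,m,a}'(z)=z^{m-1}\bigl(n z^{n-m}-am\bigr)$, and $n z^{n-m}-am$ is strictly increasing on $(0,\infty)$ (because $n-m\geq 1$), so it changes sign at most once; hence $T_{n,m,a}$ is negative on all of $[1,\infty)$ up to its first zero there, and has exactly one zero in $(1,\infty)$, which we call $t_{n,m,a}^{(1)}$. In particular $T_{n,m,a}(x)<0$ for $1\leq x<t_{n,m,a}^{(1)}$.

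Next, set $c=\dfrac{\log a}{n-m}>0$ (positive since $a\geq 2$) and $x_{0}=1+c$. Applying $\log(1+y)<y$ with $y=c$ gives $(n-m)\log x_{0}<(n-m)c=\log a$, hence $x_{0}^{\,n-m}<a$. Consequently
\[
T_{n,m,a}(x_{0}) = x_{0}^{m}\bigl(x_{0}^{\,n-m}-a\bigr)-1 < -1 < 0 .
\]
Since $x_{0}>1$ and $T_{n,m,a}$ is negative on $[1,t_{n,m,a}^{(1)})$ and positive beyond it, this forces $x_{0}<t_{n,m,a}^{(1)}$, i.e. $t_{n,m,a}^{(1)}>1+\dfrac{\log a}{n-m}$. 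Finally, when $a$ is an integer with $a\geq 2$ we have $T_{n,m,a}\in\bbZ[z]$ and $t_{n,m,a}^{(1)}$ is a root of it, so $\maison{T_{n,m,a}}\geq t_{n,m,a}^{(1)}>1+\dfrac{\log a}{n-m}$.

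There is no genuinely hard step: the substance is entirely in choosing the evaluation point $x_{0}=1+\log(a)/(n-m)$ so that the estimate $x_{0}^{\,n-m}<a$ makes $T_{n,m,a}(x_{0})$ visibly negative. The only point that needs a little care is justifying that $x_{0}$ lies to the \emph{left} of the intended root $t_{n,m,a}^{(1)}$ rather than merely being \emph{some} point where $T_{n,m,a}$ is negative; this is exactly what the monotonicity/sign analysis of $T_{n,m,a}$ on $(1,\infty)$ in the first step provides, and it also fixes which real root the superscript $(1)$ denotes.
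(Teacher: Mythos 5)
Your proof is correct and follows essentially the same route as the paper's: both arguments reduce to comparing $t_{n,m,a}^{(1)}$ with the point where $(1+t_0)^{n-m}=a$ and then invoking $e^{x}>1+x$ (your $\log(1+y)<y$ is the same inequality in contrapositive form, applied by evaluating $T_{n,m,a}$ at $x_0=1+\log(a)/(n-m)$ rather than solving the defining equation of the root). The only difference is that you establish the existence, uniqueness and sign behaviour of the root in $(1,\infty)$ by a direct derivative/monotonicity argument instead of citing the paper's Lemma~\ref{lem:rt-location} (proved via Descartes' rule), which makes your version self-contained.
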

 
An algebraic integer $\alpha$ of degree $n$ is {\it{extremal}} if $\maison{\alpha}$
is the minimum of the houses of the algebraic integers of degree $n$. Denote by
$m(n)$ this minimum. From Smyth's example, $P_{n}(x)=x^{3n}+x^{2n}-1$, given
above (again, see \cite{B1,S1}), it is known that $m(n) \leq \theta_{0}^{3/(2n)}
= \maison{P_{n}}$. Here we have
 
\begin{Cor}
\label{cor:VF-2}
\begin{enumerate}
\item
For all positive integers $a$, $m$ and $n$ with $a \geq 2$, $\gcd(m,n)=1$, $m$
odd and $n$ even, if $R_{n,m,a}$ is irreducible over $\bbQ$, then its zeros
are not extremal.

\item
For all positive integers $a$, $m$ and $n$ with $a \geq 2$, $\gcd(m,n)=1$, $m$
even and $n$ odd, if $S_{n,m,a}$ is irreducible over $\bbQ$, then its zeros are
not extremal.

\item
For all positive integers $a$, $m$ and $n$ with $a \geq 2$ and $\gcd(m,n)=1$,
if $T_{n,m,a}$ is irreducible over $\bbQ$, then its zeros are not extremal.
\end{enumerate}
\end{Cor}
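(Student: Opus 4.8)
The plan is to play the house lower bounds of Theorems~\ref{thm:house-R}, \ref{thm:house-S} and \ref{thm:house-T} against the upper bound $m(n) \le \theta_0^{3/(2n)}$ recalled just before the corollary. Observe first that if one of these trinomials is irreducible over $\bbQ$ then, being monic with integer coefficients, it is the minimal polynomial of each of its zeros, so all of them are algebraic integers of degree exactly $n$ sharing a common house; write $\maison{\cdot}$ for this quantity (equivalently, for the house of the trinomial). Hence it suffices to show in each case that $\maison{\cdot} > \theta_0^{3/(2n)}$: this gives $\maison{\cdot} > m(n)$, so the zeros are not extremal.

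The one elementary estimate I would set aside at the start is
\[
1 + \frac{\log 2}{n} > \theta_0^{3/(2n)} \qquad \text{for every integer } n \ge 1 .
\]
After taking logarithms this reads $\log\!\bigl(1 + \tfrac{\log 2}{n}\bigr) > \tfrac{3}{2n}\log\theta_0$; applying $\log(1+x) > x - x^2/2$ (valid for all $x>0$) to the left-hand side, it is enough to check $\log 2 - \tfrac{(\log 2)^2}{2n} > \tfrac{3}{2}\log\theta_0$, which holds for all $n \ge 1$ because $\tfrac{(\log 2)^2}{2} = 0.2402\ldots$ is smaller than $\log 2 - \tfrac{3}{2}\log\theta_0 = 0.2713\ldots\,$. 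I would also record the trivial $\tfrac{c}{n-m} > \tfrac{c}{n}$ for $c>0$, since $0 < n-m < n$.

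With this in hand, cases (a) and (b) for $a \ge 3$, and case (c) for every $a \ge 2$, are immediate: in all of them either $\log(a-1) \ge \log 2$ (cases (a), (b)) or $\log a \ge \log 2$ (case (c)), so Theorems~\ref{thm:house-R}--\ref{thm:house-T} give
\[
\maison{\cdot} \;\ge\; 1 + \frac{\log 2}{n-m} \;>\; 1 + \frac{\log 2}{n} \;>\; \theta_0^{3/(2n)} \;\ge\; m(n) .
\]
For the leftover cases $a = 2$ in (a) and (b), the assertions are in fact vacuous: $R_{n,m,2}(1) = 1 - 2 + 1 = 0$, so $z-1 \mid R_{n,m,2}(z)$, and — using that $n$ is odd and $m$ even — $S_{n,m,2}(-1) = -1 + 2 - 1 = 0$, so $z+1 \mid S_{n,m,2}(z)$; since these trinomials have degree $\ge 2$, they are reducible over $\bbQ$ and the hypothesis can never be satisfied.

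There is really only one place where a little care is needed, namely the estimate in the second paragraph: the house bounds from the three theorems are of the shape $1 + c/(n-m)$ with $c \ge \log 2$, and to beat $m(n) \le \theta_0^{3/(2n)}$ one must bound $\theta_0^{3/(2n)}$ \emph{from above} — the handy $\theta_0^{3/(2n)} > 1 + \tfrac{3}{2n}\log\theta_0$ runs the wrong way — which is why one passes through $\log(1+x) > x - x^2/2$. Noticing that the $a = 2$ instances of (a) and (b) are degenerate, the trinomial then having $\pm 1$ as a root, is the only other thing to spot; everything else is a direct quotation of Theorems~\ref{thm:house-R}, \ref{thm:house-S} and \ref{thm:house-T}.
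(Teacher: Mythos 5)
Your proof is correct, but it follows a genuinely different route from the paper's. The paper does not invoke Theorems~\ref{thm:house-R}--\ref{thm:house-T} at all: it uses the cruder bound $m(n) \leq 2^{1/n}$ (from $x^{n}-2$) and simply evaluates the trinomial at $2^{1/n}$, e.g.\ $T_{n,m,a}\left(2^{1/n}\right) = 1 - a\,2^{m/n} < 0$, so the largest real zero exceeds $2^{1/n} \geq m(n)$. You instead chain the house lower bounds $1+\log(a-1)/(n-m)$ resp.\ $1+\log(a)/(n-m)$ against the Smyth--Boyd bound $m(n) \leq \theta_{0}^{3/(2n)}$ quoted just before the corollary, which forces you to prove the small calculus inequality $1+\tfrac{\log 2}{n} > \theta_{0}^{3/(2n)}$; your verification of it via $\log(1+x) > x - x^{2}/2$ is sound. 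Two remarks on the trade-off. First, your route \emph{essentially} needs the sharper $\theta_{0}^{3/(2n)}$ bound on $m(n)$: since $2^{1/n} = e^{(\log 2)/n} > 1 + (\log 2)/n$, the bounds of the three theorems would not by themselves beat $2^{1/n}$ without extra work, so you are leaning on a fact the paper asserts with a citation but does not use in its own proof. Second, your handling of $a=2$ in parts (a) and (b) — where $\log(a-1)=0$ makes the theorems' bounds trivial — by observing that $R_{n,m,2}$ and $S_{n,m,2}$ have $1$ resp.\ $-1$ as a rational zero and are therefore reducible, so the hypothesis is vacuous, is correct and is actually \emph{more} careful than the paper, whose claim that the proofs of (a) and (b) are ``identical'' to (c) silently skips over the fact that $R_{n,m,2}\left(2^{1/n}\right) = 3 - 2^{1+m/n}$ need not be negative when $m/n$ is small.
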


\section{Proof of Theorem~\ref{thm:irreduc}}

To prove Theorem~\ref{thm:irreduc}, we use the following result of Schinzel.

\begin{Lem}[Schinzel]
\label{lem:irreduc}
For positive integers $0<m<n$, put $m_{1}=m/\gcd(m,n)$ and $n_{1}=n/\gcd(m,n)$.

Let $a,b,c \in \bbZ \backslash \{0\}$, $\gcd(a,b,c)=1$. If $ax^{n}+bx^{m}+c$ is
reducible, then at least one of the following four conditions is satisfied:

\vspace*{1.0mm}

\noindent
{\rm (a)} $|b| \leq |a|^{m_{1}} |c|^{n_{1}-m_{1}}+1$;

\vspace*{1.0mm}

\noindent
{\rm (b)} $|b| \leq \dfrac{2m_{1}(n_{1}-m_{1})}{\log \left( 2m_{1} (n_{1}-m_{1}) \right)} |a|^{m/n} |c|^{(n-m)/n}$,
$\min \{ |a|, |c| \}=1$ and $\sqrt[p]{\max \{ |a|, |c| \}} \in \bbZ$ for some
prime $p|n_{1}$;

\vspace*{1.0mm}

\noindent
{\rm (c)} for some $q|\gcd(m,n)$, $q$ a prime or $q=$, $\sqrt[q]{|a|} \in \bbZ$,
$\sqrt[q]{|c|} \in \bbZ$ and if $q=2$, then $(-1)^{n_{1}}ac>0$, while if $q=4$,
then $ac>0$ and $n_{1} \equiv 0 \bmod 2$;

\vspace*{1.0mm}

\noindent
{\rm (d)} $4|\gcd(m,n)$, $ac>0$, $n_{1} \equiv 1 \bmod 2$ and either
$\sqrt[4]{|a|} \in \bbZ$, $\sqrt[4]{4|c|} \in \bbZ$ or
$\sqrt[4]{4|a|} \in \bbZ$, $\sqrt[4]{|c|} \in \bbZ$.
\end{Lem}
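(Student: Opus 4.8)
This is the trinomial reducibility theorem of Schinzel, and the approach I would take is the one behind his proof: peel off $\gcd(m,n)$ with a Capelli-type argument, then analyse how a trinomial with coprime exponents can split. First set $d=\gcd(m,n)$ and write $f(x)=ax^{n}+bx^{m}+c=g(x^{d})$ with $g(y)=ay^{n_{1}}+by^{m_{1}}+c$ and $\gcd(m_{1},n_{1})=1$. Since $\gcd(a,b,c)=1$, both $f$ and $g$ are primitive, so by Gauss's lemma ``reducible over $\bbQ$'' is the same as ``reducible over $\bbZ$'' for each. There are then exactly two ways $f$ can be reducible: $g$ is already reducible, or $g$ is irreducible but the substitution $y\mapsto x^{d}$ destroys irreducibility.

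The second mechanism is governed by a classical theorem of Capelli: for $g$ irreducible with a root $\beta$, the polynomial $g(x^{d})$ is irreducible over $\bbQ$ if and only if $x^{d}-\beta$ is irreducible over $\bbQ(\beta)$, which fails precisely when $\beta=\delta^{q}$ for some $\delta\in\bbQ(\beta)$ and a prime $q\mid d$, or when $4\mid d$ and $\beta=-4\delta^{4}$ for some $\delta\in\bbQ(\beta)$. In the first case $\delta$ is a root of the trinomial $ay^{qn_{1}}+by^{qm_{1}}+c$ and generates the same degree-$n_{1}$ field as $\beta$; comparing the degree-$n_{1}$ $\bbZ$-irreducible factor of that trinomial with $g$ — whose leading and constant coefficients divide $a$ and $c$ — and taking norms of $\beta=\delta^{q}$ forces $|a|$ and $|c|$ to be $q$-th powers of integers, while tracking the sign of $N(\beta)$ when $q=2$ and the arithmetic of the $-4\delta^{4}$ branch when $q=4$ produces the side conditions $(-1)^{n_{1}}ac>0$, $n_{1}\equiv 0\pmod 2$, etc.; this is the source of (c) and (d).

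For the first mechanism, suppose $g=g_{1}g_{2}$ in $\bbZ[y]$ with $\deg g_{i}\ge 1$. Here $\gcd(m_{1},n_{1})=1$ rules out $g$ being itself a composition $\tilde g(y^{e})$ with $e>1$, so the splitting must be ``genuine'', and the Newton polygon of $g$ for the archimedean absolute value shows that when $|b|$ is large its $n_{1}$ roots separate cleanly into $m_{1}$ of modulus $\approx|c/b|^{1/m_{1}}$ and $n_{1}-m_{1}$ of modulus $\approx|b/a|^{1/(n_{1}-m_{1})}$. Each $g_{i}$ carries a subset of these roots, and its nonzero integer constant (resp.\ leading) coefficient — a divisor of $c$ (resp.\ $a$) — equals $\pm$ the leading coefficient times the product of its roots; combining these size and divisibility constraints with the requirement that $g_{1}g_{2}$ have vanishing coefficients at every degree other than $0$, $m_{1}$, $n_{1}$ yields the explicit bound $|b|\le|a|^{m_{1}}|c|^{n_{1}-m_{1}}+1$ of (a) outside a thin residual range. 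That range is where $\min\{|a|,|c|\}=1$, so that after reversal $g$ is essentially a monic trinomial whose only obstruction to irreducibility is a cyclotomic-coset / perfect-power phenomenon detected by a prime $p\mid n_{1}$; estimating how large a middle coefficient such a factorisation can support gives (b), the factor $2m_{1}(n_{1}-m_{1})/\log\left(2m_{1}(n_{1}-m_{1})\right)$ emerging from that count.

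The main obstacle is extracting the sharp explicit constants and the exact parity and sign conditions rather than merely qualitative statements. Concretely, separating the generic bound (a) from the genuinely exceptional reducibility of (b) — and proving nothing else occurs once $\min\{|a|,|c|\}>1$ — requires a delicate combinatorial argument about which root-subsets an integer factor can carry; and in the substitution mechanism, correctly handling the $\bbQ(\beta)^{p}$ and $-4\bbQ(\beta)^{4}$ alternatives together with the interplay among $q\mid\gcd(m,n)$, the parity of $n_{1}$, and the sign of $ac$ is technically heavy. I would treat the clean cases (a), (c), (d) first and then isolate and dispatch the single exceptional family feeding into (b).
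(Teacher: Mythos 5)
The paper does not prove this lemma at all: it is quoted verbatim as Theorem~9 of Schinzel's memoir \emph{On reducible trinomials} (Dissertationes Math.\ 329, 1993), and the ``proof'' in the paper is a one-line citation. So the relevant comparison is between your sketch and Schinzel's actual argument, and there the problem is that your proposal is an outline of a strategy, not a proof. The Capelli reduction in your first two paragraphs is sound in spirit (reducibility of $g(x^{d})$ for $g$ irreducible is indeed controlled by $\beta\in\bbQ(\beta)^{q}$ for a prime $q\mid d$ or $\beta\in-4\bbQ(\beta)^{4}$ with $4\mid d$), but the passage from ``$\beta=\delta^{q}$'' to the precise conclusions of (c) and (d) is asserted rather than derived: taking norms of $\beta=\delta^{q}$ only gives information about $c/a$ up to sign and $q$-th powers in $\bbQ$, and extracting that $|a|$ and $|c|$ are themselves $q$-th powers of integers, together with the exact sign condition $(-1)^{n_{1}}ac>0$ for $q=2$ and the parity conditions for $q=4$, is exactly the content you would need to supply.

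The gap is even wider for (a) and (b), which are the quantitatively hard part of the theorem. Your Newton-polygon picture (roots splitting into two clusters of sizes $m_{1}$ and $n_{1}-m_{1}$ when $|b|$ is large) is the right heuristic, but you never show how the ``size and divisibility constraints'' on a putative integer factor produce the bound $|b|\leq|a|^{m_{1}}|c|^{n_{1}-m_{1}}+1$, nor why the only escape from it requires $\min\{|a|,|c|\}=1$ together with $\max\{|a|,|c|\}$ being a perfect $p$-th power for a prime $p\mid n_{1}$, nor where the explicit constant $2m_{1}(n_{1}-m_{1})/\log\left(2m_{1}(n_{1}-m_{1})\right)$ in (b) comes from; your own closing paragraph concedes these points as ``technically heavy'' and defers them. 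In Schinzel's memoir this is a long argument resting on his earlier theory of reducibility of lacunary polynomials, not a few lines of bookkeeping. As written, your proposal would not compile into a proof of the lemma; if the intent is simply to use the result (as the paper does), the correct move is to cite \cite{Sch}, Theorem~9, rather than to sketch its proof.
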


\begin{proof}
This is Theorem~9 on pages~12--13 of \cite{Sch}.
\end{proof}

\begin{proof}
To prove Theorem~\ref{thm:irreduc}, we suppose that $x^{n}+ax^{m}\pm 1$ is
reducible.

We apply Lemma~\ref{lem:irreduc} with $a=1$, $c=\pm 1$ and $b$ equal to our $a$ here.

Condition~(a) implies that $|a| \leq 2$.

Since $m_{1} \left( n_{1}-m_{1} \right) \leq n_{1}^{2}/4$ for all $0 \leq m_{1} \leq n_{1}$,
condition~(b) implies that $|a|<n^{2}/2$. A quick calculation for small relatively
prime values of $m$ and $n$ shows that we must have $|a|<0.321n^{2}$ for $n \geq 3$.

Since $\gcd(m,n)=1$, conditions~(c) and (d) cannot hold.
\end{proof}

\section{Proof of Theorem~\ref{thm:4}}

The proof requires two preliminary results.
 
\begin{Lem}
\label{lem:VF-1}
If $Q$ is a polynomial with complex coefficients, then
\[
\lim_{n \rightarrow \infty} \mHgt \left( z^{n} + Q(z) \right)
= \exp \left( \frac{1}{2 \pi} \int_{0}^{2 \pi} \log \max \left( 1, \left| Q \left( e^{it} \right) \right| \right) dt \right).
\]
\end{Lem}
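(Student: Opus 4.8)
First I would translate everything into circle integrals via Jensen's formula. For $n>\deg Q$ the polynomial $z^{n}+Q(z)$ is monic, so Jensen's formula gives
\[
\log\mHgt\left(z^{n}+Q(z)\right)=\frac{1}{2\pi}\int_{0}^{2\pi}\log\left|e^{int}+Q\left(e^{it}\right)\right|\,dt ,
\]
and applying Jensen to the linear polynomial $w+c$, whose only root is $-c$, gives $\log\max(1,|c|)=\frac{1}{2\pi}\int_{0}^{2\pi}\log|e^{i\phi}+c|\,d\phi$; taking $c=Q(e^{i\theta})$ and integrating in $\theta$ shows that the right-hand side of the lemma equals $\exp(L)$, where
\[
L:=\frac{1}{(2\pi)^{2}}\int_{0}^{2\pi}\!\!\int_{0}^{2\pi}\log\left|e^{i\phi}+Q\left(e^{i\theta}\right)\right|\,d\phi\,d\theta
\]
is the logarithm of the two-variable Mahler measure of $w+Q(z)$. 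Thus the lemma asserts that the Mahler measure of the specialisation $w=z^{n}$ of $w+Q(z)$ tends to the Mahler measure of $w+Q(z)$ itself --- a Boyd--Lawton statement, and one legitimate proof is simply to quote Lawton's theorem. I will instead sketch the elementary one-variable argument; the goal is to show $\frac{1}{2\pi}\int_{0}^{2\pi}\log|e^{int}+Q(e^{it})|\,dt\to L$ as $n\to\infty$.

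The analytic input is the equidistribution of the curve $t\mapsto(e^{int},e^{it})$ on the torus: for every continuous $h(\phi,\theta)$ that is $2\pi$-periodic in each variable,
\[
\frac{1}{2\pi}\int_{0}^{2\pi}h(nt,t)\,dt\ \longrightarrow\ \frac{1}{(2\pi)^{2}}\int_{0}^{2\pi}\!\!\int_{0}^{2\pi}h(\phi,\theta)\,d\phi\,d\theta\qquad(n\to\infty).
\]
This is immediate on the characters $h(\phi,\theta)=e^{i(j\phi+k\theta)}$, for which $\frac{1}{2\pi}\int_{0}^{2\pi}e^{i(jn+k)t}\,dt$ vanishes for all large $n$ unless $j=k=0$, and it extends to all such $h$ by Stone--Weierstrass. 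It cannot be applied directly to $h(\phi,\theta)=\log|e^{i\phi}+Q(e^{i\theta})|$, which has a logarithmic singularity along $\{e^{i\phi}=-Q(e^{i\theta})\}$, so one works with the continuous truncations $h_{N}(\phi,\theta):=\max\bigl(\log|e^{i\phi}+Q(e^{i\theta})|,-N\bigr)$. For each fixed $N$, equidistribution gives $\frac{1}{2\pi}\int_{0}^{2\pi}\max\bigl(\log|e^{int}+Q(e^{it})|,-N\bigr)\,dt\to\frac{1}{(2\pi)^{2}}\iint h_{N}$ as $n\to\infty$, and since $h_{N}$ decreases to $\log|e^{i\phi}+Q(e^{i\theta})|$ while staying bounded above, $\frac{1}{(2\pi)^{2}}\iint h_{N}\to L$ (and $L$ is finite, being $\ge 0$).

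The main obstacle is to bound the truncation error uniformly in $n$, i.e.\ to show
\[
\varepsilon_{N}:=\sup_{n}\ \frac{1}{2\pi}\int_{0}^{2\pi}\Bigl(\max\bigl(\log|e^{int}+Q(e^{it})|,-N\bigr)-\log\left|e^{int}+Q\left(e^{it}\right)\right|\Bigr)\,dt\ \longrightarrow\ 0\qquad(N\to\infty).
\]
Write $P_{n}(z)=z^{n}+Q(z)$. The integrand is $\ge 0$, vanishes off $B_{n,N}=\{t:|P_{n}(e^{it})|<e^{-N}\}$, and is $\le\bigl|\log|P_{n}(e^{it})|\bigr|$ there. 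The crucial observation is that $B_{n,N}$ lies in a set not depending on $n$: on the unit circle $|P_{n}(e^{it})|\ge\bigl|\,|Q(e^{it})|-1\,\bigr|$, hence $B_{n,N}\subseteq A_{e^{-N}}$, where $A_{\eta}:=\bigl\{t:\bigl|\,|Q(e^{it})|-1\,\bigr|<\eta\bigr\}$. Now $|Q(e^{it})|^{2}-1$ is a trigonometric polynomial in $t$; it is not identically zero unless $Q$ is a monomial $cz^{k}$ with $|c|=1$, in which case $z^{n}+Q(z)=z^{k}(z^{n-k}+c)$ has all roots of modulus $\le 1$ for $n>k$, so both sides of the lemma equal $1$ and there is nothing to prove. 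Otherwise $|Q(e^{it})|^{2}-1$ has finitely many zeros, of orders bounded by some integer $\mu$, whence $|A_{\eta}|\le C\eta^{1/\mu}$ for small $\eta$, with $C$ depending only on $Q$. A layer-cake estimate then gives, for large $N$,
\[
\frac{1}{2\pi}\int_{B_{n,N}}\bigl|\log|P_{n}(e^{it})|\bigr|\,dt\ \le\ \frac{1}{2\pi}\left(N\,\bigl|A_{e^{-N}}\bigr|+\int_{N}^{\infty}\bigl|A_{e^{-s}}\bigr|\,ds\right)\ \le\ C'(N+1)e^{-N/\mu},
\]
so $\varepsilon_{N}\to 0$. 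Finally, a three-$\varepsilon$ argument --- fix $N$ so that $\varepsilon_{N}$ and $\bigl|L-\tfrac{1}{(2\pi)^{2}}\iint h_{N}\bigr|$ are each $<\varepsilon/3$, then let $n\to\infty$ and use equidistribution --- yields $\log\mHgt(z^{n}+Q(z))\to L$, equivalently $\mHgt(z^{n}+Q(z))\to\exp(L)$, which is the lemma.
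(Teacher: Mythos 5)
Your proof is correct, but it takes a genuinely different route from the paper. The paper disposes of the lemma in two lines: it quotes Boyd's result (Appendix~3 of \cite{B2}) that $\lim_{n\to\infty}\mHgt\left(F\left(z,z^{n}\right)\right)=\mHgt(F(z,w))$ for a polynomial $F$, specialises to $F(z,w)=w+Q(z)$, and then applies Jensen's formula in the variable $w$ to turn $\mHgt(w+Q(z))$ into the stated integral. You correctly identify the statement as exactly this Boyd--Lawton specialisation (your reduction of the right-hand side to the two-variable Mahler measure $L$ via Jensen is the same computation the paper uses), but instead of citing it you prove it: Weyl equidistribution of $t\mapsto\left(e^{int},e^{it}\right)$ handles the continuous truncations $h_{N}$, and the logarithmic singularity is controlled uniformly in $n$ by the key observation $\left|e^{int}+Q\left(e^{it}\right)\right|\geq\bigl|\,|Q(e^{it})|-1\,\bigr|$, which traps the bad set inside an $n$-independent neighbourhood of the zero set of the trigonometric polynomial $|Q(e^{it})|^{2}-1$; the degenerate case $Q(z)=cz^{k}$, $|c|=1$, is correctly checked by hand. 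The estimates ($|A_{\eta}|\ll\eta^{1/\mu}$ from the finite vanishing order, then the layer-cake bound $\ll(N+1)e^{-N/\mu}$) all go through. What your approach buys is a self-contained, elementary proof in place of a citation, at the cost of about a page of analysis; what the paper's approach buys is brevity and the full strength of Boyd's more general theorem.
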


\begin{proof}
D. Boyd \cite[Appendix~3]{B2} proved that $\d \lim_{n \rightarrow \infty} \mHgt \left( F \left(z, z^{n} \right) \right)
= \mHgt(F(z, w))$ if $F$ is a polynomial. Thus we have $\d \lim_{n \rightarrow \infty} \mHgt \left( z^{n} +Q(z) \right)
= \mHgt (w+Q(z))$. Now we apply Jensen's formula \cite{J} with respect to the
variable $w$
to obtain the lemma (also see \cite[equation~(21)]{B2}).
\end{proof} 

\begin{Cor}
\label{cor:VF-3}
\[
\d \lim_{n \rightarrow \infty} \mHgt \left( z^{n} + az^{m}+b \right)
= \exp\left( \frac{1}{2 \pi} \int_{0}^{2 \pi} \log \max \left( 1, \left| ae^{it}+b \right| \right) dt \right).
\]
\end{Cor}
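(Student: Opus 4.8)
The plan is to deduce Corollary~\ref{cor:VF-3} directly from Lemma~\ref{lem:VF-1} by making the obvious choice of $Q$. Specifically, I would apply Lemma~\ref{lem:VF-1} with $Q(z) = az^{m} + b$, so that $z^{n} + Q(z) = z^{n} + az^{m} + b$, and the lemma immediately gives
\[
\lim_{n \rightarrow \infty} \mHgt \left( z^{n} + az^{m} + b \right)
= \exp \left( \frac{1}{2\pi} \int_{0}^{2\pi} \log \max \left( 1, \left| a e^{imt} + b \right| \right) dt \right).
\]
So the only thing that requires an argument is that the integral on the right equals the claimed integral with $e^{it}$ in place of $e^{imt}$.

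The key step is therefore the substitution $u = mt$ in the integral, together with the periodicity of the integrand. The function $t \mapsto \log \max\left(1, |ae^{imt}+b|\right)$ is periodic with period $2\pi/m$, so its integral over $[0, 2\pi]$ is $m$ times its integral over $[0, 2\pi/m]$. After the change of variable $u = mt$ (so $du = m\,dt$), the integral over $[0, 2\pi/m]$ becomes $\frac{1}{m}\int_{0}^{2\pi} \log \max\left(1, |ae^{iu}+b|\right) du$, and the two factors of $m$ cancel, giving
\[
\frac{1}{2\pi} \int_{0}^{2\pi} \log \max \left( 1, \left| a e^{imt} + b \right| \right) dt
= \frac{1}{2\pi} \int_{0}^{2\pi} \log \max \left( 1, \left| a e^{iu} + b \right| \right) du,
\]
which is exactly the asserted formula. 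One should note in passing that the hypothesis $\gcd(m,n)=1$ from Theorem~\ref{thm:4} plays no role here; the corollary holds for any positive integer $m$, since the value of the integral is independent of $m$.

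I do not anticipate any real obstacle: the only mildly delicate point is confirming that the integrand is genuinely integrable (it has at worst a logarithmic singularity where $|ae^{it}+b|=1$ is crossed, or more precisely the integrand is continuous and bounded, since $\log\max(1,\cdot) \geq 0$ is continuous and $|ae^{it}+b|$ is bounded), so that the substitution and the periodicity argument are legitimate. This is standard, and Lemma~\ref{lem:VF-1} has already implicitly handled it. Thus the proof is a one-line application of the lemma followed by a routine change of variables.
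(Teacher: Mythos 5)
Your proposal is correct and follows essentially the same route as the paper: apply Lemma~\ref{lem:VF-1} with $Q(z)=az^{m}+b$ and then remove the $m$ in the exponent via the change of variables $u=mt$ combined with $2\pi$-periodicity (the paper splits $\frac{1}{m}\int_{0}^{2\pi m}$ into $m$ copies of $\int_{0}^{2\pi}$, which is the same computation in a slightly different order). No gaps.
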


\begin{proof}
Lemma~\ref{lem:VF-1} gives $\d \lim_{n \rightarrow \infty} \mHgt \left( z^{n} +az^{m} +b \right)
= \exp\left( \frac{1}{2 \pi} \int_{0}^{2 \pi} \log \max \left( 1, \left| ae^{itm}+b \right| \right) dt \right)$.
Furthermore,
\begin{align*}
\int_{0}^{2 \pi} \log \max \left( 1, \left| ae^{itm}+b \right| \right) dt
& = \frac{1}{m} \sum_{\ell=0}^{m-1} \int_{2 \pi \ell}^{2 \pi (\ell+1)} \log \max \left(1, \left| ae^{it}+b \right| \right) dt \\
&= \int_{0}^{2 \pi} \log \max \left( 1, \left| ae^{it}+b \right| \right) dt,
\end{align*}
proving the corollary.
\end{proof}

We are now able to prove Theorem~\ref{thm:4}. Let $P(z)=z^{n}+az^{m}+b$ be a
trinomial with $a,b \in \bbC$. We will need this preliminary calculation: if
$a =|a|e^{iu}$ and $b=|b|e^{iv}$ then 
$\left| ae^{it} +b \right|^{2} = \left| |a|e^{i(t+u-v)} + |b| \right|^{2}
=|a|^{2} + 2|a||b| \cos(t') + |b|^{2}$ where $t'=t+u-v$.

(a) $|a|-|b| \geq 1$:\\
We have $\left| ae^{it}+b \right|^{2} \geq (|a|-|b|)^{2} \geq 1$. Therefore,
$\max \left( 1, \left| ae^{it}+b \right| \right)=\left| ae^{it}+b \right|$, so
Corollary~\ref{cor:VF-3} gives
\[
\d \lim_{n \rightarrow \infty} \mHgt \left( z^{n} +az^{m} +b \right) = \mHgt (az+b)
= |a| \max (1, |b|/|a|)= \max(|a|,|b|).
\]

(b) $|b|-|a| \geq 1$:\\
We use Rouch\'{e}'s Theorem (again, see Corollary on page~153 \cite{Ah}). We want
to prove that all the zeros of $z^{n}P(1/z)=bz^{n}+az^{n-m}+1$
have absolute value at most $1$.

Suppose first that $|b|-|a|>1$. Putting $f(z)=bz^{n}$ and $g(z)=az^{n-m}+1$, we see
that $|g(z)| \leq |a|+1 < |b|=|f(z)|$ on the unit circle, since $|b|-|a|>1$. Hence
$P(z)=f(z)+g(z)$ and $f(z)$ have the same number of zeros inside the unit circle.
Since $f(z)$ has $n$ such zeros, so does $P(z)$. Hence all the zeros of $P$ have
absolute value at least $1$ and $\mHgt(P)=|b|$.

Now suppose that $|b|-|a|=1$ and let $\varepsilon>0$. We now consider $g(z)$ and
$f(z)$ on the circle of radius $1+\varepsilon$ centred at $0$. Now
$|g(z)| \leq |a|(1+\varepsilon)^{n-m}+1=|a|+|a|\varepsilon_{1}+1$ and
$|f(z)|=b(1+\varepsilon)^{n}=|b|+|b|\varepsilon_{2}=|a|+1+|a|\varepsilon_{2}+\varepsilon_{2}$,
for some $0<\varepsilon_{1}<\varepsilon_{2}$.
So $|f(z)|-|g(z)| \geq |a|\varepsilon_{2}+\varepsilon_{2}-|a|\varepsilon_{1}>0$.
Hence within any circle of radius $1+\varepsilon$ centred at $0$, $P(z)=f(z)+g(z)$
and $f(z)$ have the same number of zeros.
Since $f(z)$ has $n$ such zeros, so does $P(z)$. Taking the limit
as $\varepsilon \rightarrow 0$, we see that all the zeros of $P(z)$ have absolute
value at most $1$. Hence $\mHgt(P)=|b|$.

(c) $|a| + |b| \leq 1$:\\
We use Rouch\'{e}'s Theorem here too and proceed in the same way as above.
We put $f(z)=z^{n}+b$ and $g(z)=az^{m}$, and consider the cases $|a|+|b|<1$ and
$|a|+|b|=1$ separately.
%
%

(d) $|a-b|<1<|a+b|$:\\
We have $\left| ae^{it} +b \right| >1$ iff $t \in (0, \gamma)$ where
$\gamma = \d \arccos \left( \frac{1-|a|^{2}-|b|^{2}}{2|ab|} \right)$. From
Corollary~\ref{cor:VF-3}, we deduce
\[
\lim_{n \rightarrow \infty} \mHgt \left( z^{n} +az^{m} +b \right)
=\exp \left( \frac{1}{2 \pi} \int_{0}^{\gamma} \log \left( |a|^{2}+2|ab| \cos(t)+|b|^{2} \right) dt \right).
\]

\section{Proof of Theorem~\ref{thm:VF-2}}

We first need a lemma to show that we can interchange an integral and a sum that
arise in our proof.

\begin{Lem}
\label{lem:fubini}
Let $a$ and $b \neq 0$ be complex numbers with $|a|-|b| \geq 1$ and let
$m$ and $n$ be integers satisfying $0<m<n$. Then
\[
\int_{0}^{2\pi} \sum_{k \geq 1} \frac{1}{k} \left( \frac{e^{int}}{-ae^{imt}-b} \right)^{k} dt
=\sum_{k \geq 1} \int_{0}^{2\pi} \frac{1}{k} \left( \frac{e^{int}}{-ae^{imt}-b} \right)^{k} dt.
\]
\end{Lem}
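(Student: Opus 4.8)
The plan is to recognise the asserted identity as an instance of the dominated convergence theorem applied to the partial sums $S_{N}(t)=\sum_{k=1}^{N}\frac{1}{k}\left(\frac{e^{int}}{-ae^{imt}-b}\right)^{k}$, regarded as functions of $t\in[0,2\pi]$. First I would record the one estimate that drives everything: for every real $t$ we have $\left|-ae^{imt}-b\right|=\left|ae^{imt}+b\right|\ge|a|-|b|\ge1$, so, writing $\rho(t)=1/\left|ae^{imt}+b\right|$, the integrand satisfies $\left|\frac{e^{int}}{-ae^{imt}-b}\right|=\rho(t)\le1$ for all $t$ (in particular the denominator never vanishes). Consequently, for every $t$ with $\rho(t)<1$ the series $\sum_{k\ge1}\frac1k\left(\frac{e^{int}}{-ae^{imt}-b}\right)^{k}$ converges absolutely and $\left|S_{N}(t)\right|\le\sum_{k=1}^{N}\frac{\rho(t)^{k}}{k}\le-\log\!\left(1-\rho(t)\right)=:g(t)$. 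Since $\rho$ is continuous, the set $\{\,t:\rho(t)=1\,\}$ is finite (see below), hence negligible, so this holds for a.e.\ $t$.

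If $|a|-|b|>1$ then $\rho(t)\le(|a|-|b|)^{-1}<1$ uniformly in $t$, so $g$ is bounded, $S_{N}\to\sum_{k\ge1}\frac1k(\cdots)^{k}$ uniformly on $[0,2\pi]$, and term-by-term integration is immediate. The substantive case is $|a|-|b|=1$, and there the entire proof reduces to showing $g\in L^{1}([0,2\pi])$. Write $\left|ae^{imt}+b\right|^{2}=|a|^{2}+2|a||b|\cos(mt+\phi)+|b|^{2}$ for a suitable constant $\phi$. Its minimum value is $(|a|-|b|)^{2}=1$, attained at exactly the $m$ points of $[0,2\pi)$ where $\cos(mt+\phi)=-1$, and at each of these the second derivative in $t$ equals $2|a||b|m^{2}>0$. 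Hence, near each of these finitely many points, $\left|ae^{imt}+b\right|-1$ vanishes to order exactly two, so, since $g(t)=\log\left|ae^{imt}+b\right|-\log\!\left(\left|ae^{imt}+b\right|-1\right)$ with the first term bounded, $g$ has only a logarithmic singularity there and is bounded elsewhere; as $\int_{0}^{\varepsilon}\log(1/s)\,ds<\infty$, it follows that $g\in L^{1}([0,2\pi])$.

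With $g\in L^{1}$ in hand, the dominated convergence theorem gives $\int_{0}^{2\pi}\lim_{N\to\infty}S_{N}(t)\,dt=\lim_{N\to\infty}\int_{0}^{2\pi}S_{N}(t)\,dt$. The left-hand side is precisely the left-hand side of the asserted identity, and since each $S_{N}$ is a finite sum, $\int_{0}^{2\pi}S_{N}(t)\,dt=\sum_{k=1}^{N}\int_{0}^{2\pi}\frac1k\left(\frac{e^{int}}{-ae^{imt}-b}\right)^{k}dt$, whose limit as $N\to\infty$ is the right-hand side; this closes the argument. The main obstacle is the $L^{1}$-bound on $g$ in the boundary case $|a|-|b|=1$: one must rule out a worse-than-logarithmic blow-up of $-\log(1-\rho(t))$ at the points where $\left|ae^{imt}+b\right|$ attains its minimum, and this is exactly what the non-degeneracy of those minima (they are simple critical points of a cosine, so the relevant second derivative does not vanish) supplies. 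A variant that avoids $g$ is to check Tonelli's hypothesis directly, estimating $\int_{0}^{2\pi}\left|ae^{imt}+b\right|^{-k}dt$ (which is $O(k^{-1/2})$ in the boundary case, by the Laplace method near those same points) so that $\sum_{k\ge1}\frac1k\int_{0}^{2\pi}\rho(t)^{k}\,dt<\infty$.
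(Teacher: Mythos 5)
Your proof is correct, and its overall skeleton matches the paper's: dispose of the case $|a|-|b|>1$ by a uniform bound, and in the boundary case $|a|-|b|=1$ reduce everything to the integrability of $-\log\left(1-\left|ae^{imt}+b\right|^{-1}\right)$ near the finitely many points of $[0,2\pi)$ where $\left|ae^{imt}+b\right|=1$. Where you genuinely diverge is in how that local integrability is established, and your route is both more general and more robust. The paper restricts to real $a,b$, splits into sign cases, and tries to prove a \emph{linear} lower bound $1-\left(a^{2}+b^{2}+2ab\cos t\right)^{-1/2}\geq ct$ via a convexity/chord argument; that bound cannot actually hold as stated, because at the singular point the function $\left|ae^{it}+b\right|^{2}$ has a critical point ($u'(t_{0})=0$), so $1-\left|ae^{it}+b\right|^{-1}$ vanishes to order two, not one (the integrability conclusion survives, since $\log$ of a quadratic is still integrable, but the intermediate inequality is wrong, and the complex case is only asserted, not proved). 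You instead identify exactly this second-order vanishing: writing $\left|ae^{imt}+b\right|^{2}=|a|^{2}+2|a||b|\cos(mt+\phi)+|b|^{2}$, the minima are non-degenerate critical points of the cosine, so $\left|ae^{imt}+b\right|-1$ vanishes to order exactly two and the dominating function $g$ has only logarithmic singularities. This handles arbitrary complex $a,b$ in one stroke, with no case analysis, and your fallback via Tonelli and the Laplace-method estimate $\int_{0}^{2\pi}\left|ae^{imt}+b\right|^{-k}dt=O\left(k^{-1/2}\right)$ is an equally valid independent check. The only point worth making explicit in a final write-up is that at the $m$ exceptional points the series need not converge absolutely (and diverges if the summand equals $1$), so the left-hand integrand is defined only almost everywhere --- which is all that dominated convergence requires, as you note.
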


\begin{proof}
According to Fubini's Theorem, this relationship holds if
\[
\int_{0}^{2\pi} \sum_{k \geq 1} \left| \frac{1}{k} \left( \frac{e^{int}}{-ae^{imt}-b} \right)^{k} \right| dt
<\infty
\, \text{ or } \,
\sum_{k \geq 1} \int_{0}^{2\pi} \left| \frac{1}{k} \left( \frac{e^{int}}{-ae^{imt}-b} \right)^{k} \right| dt
<\infty.
\]

We will use the first condition here:
\[
\int_{0}^{2\pi} \sum_{k \geq 1} \frac{1}{k} \left| \left( \frac{1}{ae^{imt}+b} \right)^{k} \right| dt.
\]

For $|a|-|b|>1$, then $\left| ae^{imt}+b \right|>1$, so
$\sum_{k \geq 1} \frac{1}{k} \left| \left( \frac{1}{ae^{imt}+b} \right)^{k} \right|$
converges and can be bounded from above independently of $t$. Since the integral
is over a bounded set, it is finite and the lemma holds.

For $|a|-|b|=1$, we use the fact that $\sum_{k \geq 1} z^{k}/k=-\log(1-z)$ if
$|z| \leq 1$ and $z \neq 1$. So we need to show that
\[
\int_{0}^{2\pi} \log \left( 1- \left| \frac{1}{ae^{imt}+b} \right| \right) dt < \infty.
\]

The only place where $\left| ae^{imt}+b \right|=1$ is where
$ae^{imt}=\left( |b|+1 \right) (-b)/|b|$. Note that there are $m$ such values of
$t$ satisfying $0 \leq t<2\pi$. Elsewhere $\left| ae^{imt}+b \right|>1$,
so away from these $m$ values of $t$, the integral is bounded. We only need to
show that in all neighbourhoods of each of these $m$ values of $t$, the integral
is also bounded. It suffices to consider only one such value of $t$, $t_{0}$,
and only neighbourhoods on one side of such a $t$. That is to show that
\[
\int_{t_{0}}^{t_{0}+\varepsilon} \log \left( 1- \left| \frac{1}{ae^{it}+b} \right| \right) dt < \infty,
\]
for $\varepsilon>0$.

Suppose that $a$ and $b$ are both real numbers.
Here $\left| ae^{it}+b \right|=\sqrt{a^{2}+b^{2}+2ab\cos(t)}$.

If $a$ and $b$ have different signs, then we need to consider what happens near $t_{0}=0$.
We want a lower bound for
$1- \dfrac{1}{\sqrt{a^{2}+b^{2}+2ab\cos(t)}}$ of the form $ct$ where $c>0$
for $t$ in some interval whose left endpoint is $t_{0}=0$.
%
There are
two cases to consider: (i) $a>0$, $b<0$ and $a=-b+1$; or (ii) $a<0$, $b>0$ and $a=-b-1$.
In the first case we consider the value of the function
$1- \dfrac{1}{\sqrt{a^{2}+b^{2}+2ab\cos(t)}}
=1- \dfrac{1}{\sqrt{2b^{2}-2b+1+2(1-b)b\cos(t)}}$ at $t=0$ (where it has the
value $0$) and $t=\pi/2$ (where it has the value $1- \dfrac{1}{\sqrt{2b^{2}-2b+1}}$).
This function is also convex for $t \in [0,\pi/2]$ (this follows from the fact
that $2b^{2}-2b+1+2(1-b)b\cos(t)$ is non-negative and increasing in this
interval, approaching $b^{2}+(b-1)^{2}$ as $t \rightarrow \pi/2$, and that
$1-1/\sqrt{x}$ is a convex function). So we have
\[
1- \frac{1}{\sqrt{a^{2}+b^{2}+2ab\cos(t)}}
\geq \frac{2}{\pi} \left( 1- \frac{1}{\sqrt{2b^{2}-2b+1}} \right) t,
\]
for $0 \leq t \leq \pi/2$. That is, we can take $c$ above to be
$\dfrac{2}{\pi} \left( 1- \dfrac{1}{\sqrt{2b^{2}-2b+1}} \right)$.

Now
$\int_{0}^{\varepsilon} \log(ct) dt =\varepsilon \log (c\varepsilon) - \varepsilon$,
so
\[
\lim_{\varepsilon \rightarrow 0} \int_{0}^{\varepsilon} \log(ct) dt
=\lim_{\varepsilon \rightarrow 0} \varepsilon \log (c\varepsilon) - \varepsilon
=\lim_{\varepsilon \rightarrow 0} \frac{\log (c\varepsilon)}{1/\varepsilon}
=\lim_{\varepsilon \rightarrow 0} \frac{1/\left( c\varepsilon \right)}{-1/\varepsilon^{2}}
=-\lim_{\varepsilon \rightarrow 0} \varepsilon/c=0,
\]
by L'H\^{o}pital's rule. Thus
\[
\int_{t_{0}}^{t_{0}+\varepsilon} \log \left( 1- \left| \frac{1}{ae^{it}+b} \right| \right) dt < \infty,
\]
and so, as argued above,
\[
\int_{0}^{2\pi} \log \left( 1- \left| \frac{1}{ae^{imt}+b} \right| \right) dt < \infty.
\]

The other cases, namely when $a<0$, $b>0$ and $a=-b-1$; and when
$a$ and $b$ have the same sign are treated in the very same way, proving the lemma
when $a, b \in \bbR$.

Furthermore, a more complicated version of this same argument can be used to
prove the lemma for any $a, b \in \bbC$ satisfying the conditions of the lemma.
\end{proof}

The idea of the proof of Theorem~\ref{thm:VF-2} is the same as the one used by
the first author in the study of trinomials of height $1$ (see \cite{F1}).

Using Jensen's formula, we put
\[
\lambda_{n,m,a}=\log M \left( z^{n}+az^{m}+b \right)
= \dfrac{1}{2\pi} \int_{0}^{2\pi} \log \left| e^{int}+ae^{imt}+b \right| dt.
\]

Note that $\lambda_{n,m,a}=\log M \left( -z^{n}-az^{m}-b \right)$ holds, so we
work with $-z^{n}-az^{m}-b$ here. This change simplifies slightly what follows.


Using Jensen's formula again and since $|a|>|b|$ (from our hypothesis that
$|a|-|b| \geq 1$), we have
\[
\dfrac{1}{2\pi}\int_{0}^{2\pi} \log \left| -ae^{imt}-b \right| dt
=\log M \left( -az^{m}-b \right)
=\log |a|.
\]

Thus
\begin{align*}
\lambda_{n,m,a}-\log |a|
&= \frac{1}{2\pi}
\int_{0}^{2\pi} \log \left| -e^{int}-ae^{imt}-b \right| - \log \left| -ae^{imt}-b \right| dt \\
&= \frac{1}{2\pi}
\int_{0}^{2\pi} \log \left| 1-\frac{e^{int}}{-ae^{imt}-b} \right| dt.
\end{align*}

Writing any $0 \neq x \in \bbC$ as $x=re^{i\theta}$ where $r>0$ and $|\theta| \leq \pi$,
we have $\log(x)=i\theta+\log(r)=i\theta+\log|r|$, so $\log |x| = \Real( \log(x))$
-- we have written this using the principal value of the logarithm function, but
it holds for any branch of the logarithm function.
Therefore,
\[
\lambda_{n,m,a}-\log |a|
= \frac{1}{2\pi}
\Real \int_{0}^{2\pi} \log \left( 1-\frac{e^{int}}{-ae^{imt}-b} \right) dt.
\]

Applying Lemma~\ref{lem:fubini} and the series expansion
\[
\log \left( 1-\frac{e^{int}}{-ae^{imt}-b} \right) \\
=- \sum_{k \geq 1} \frac{1}{k} \left( \frac{e^{int}}{-ae^{imt}-b} \right)^{k},
\]
we obtain
\begin{equation}
\label{eq:lambda-diff-S}
\lambda_{n,m,a}-\log |a|
= -\frac{1}{2\pi} \sum_{k \geq 1} \frac{1}{k}
\Real \underbrace{\int_{0}^{2\pi} e^{inkt} \left( -ae^{imt}-b \right)^{-k} dt}_{I_{k}}.
\end{equation}

From $z=e^{it}$, we have $dz=ie^{it}dt$, so
\[
I_{k} = -i\int_{|z|=1} z^{kn-1} \left( -az^{m}-b \right)^{-k} dz.
\]

We evaluate this integral using the Residue theorem:
\[
I_{k} = -i (2\pi i) \sum_{r: r^{m}=-b/a} \res \left( z^{kn-1} \left( -az^{m}-b \right)^{-k}; r \right),
\]
where the sum is over all $m$-th roots of $-b/a$ and $\res \left( f(z);z_{0} \right)$
is the residue of a function $f(z)$ at $z=z_{0}$.

Now observe that $z^{kn-1} \left( -az^{m}-b \right)^{-k}$ has one more pole when
we consider this function over the Riemann sphere. It has a pole at $\infty$ too.
Furthermore, the sum of the residues of a function over the Riemann sphere
equals $0$ \cite[equation~(2.7), p.~94]{Ca}, so
\begin{equation}
\label{eq:I-as-residue-S}
I_{k} = -2\pi \res \left( z^{kn-1} \left( -az^{m}-b \right)^{-k}; \infty \right),
\end{equation}

It is also known \cite[top of page~92]{Ca} that
\[
\res \left( f(z); \infty \right)
= -\res \left( \frac{1}{z^{2}}f \left( \frac{1}{z} \right); 0 \right),
\]
so we compute this quantity now. With 
$f(z)=z^{kn-1} \left( -az^{m}-b \right)^{-k}$, we have
\[
\frac{1}{z^{2}}f \left( \frac{1}{z} \right)
= \frac{z^{-1-kn+km}}{\left( -a-bz^{m} \right)^{k}}.
\]

The negative binomial series $\left( -a-bz^{m} \right)^{-k}$ equals
\[
\sum_{i=0}^{\infty} \binom{-k}{i} (-b)^{i}z^{im}(-a)^{-k-i}
=(-1)^{k} \sum_{i=0}^{\infty} \binom{-k}{i} b^{i}z^{im}a^{-k-i}.
\]

To get the the residue, we need the coefficient for $z^{kn-km}$ term in this sum.
I.e., $i=k(n-m)/m$. If $m \nmid kn$, then there is no such coefficient and the residue
is $0$. Otherwise, the coefficient of the $z^{kn-km}$ term is
\begin{align*}
\binom{-k}{k(n-m)/m} b^{k(n-m)/m} a^{-k-k(n-m)/m}
&=(-1)^{k(n-m)/m} \binom{k+k(n-m)/m-1}{k(n-m)/m} b^{k(n-m)/m} a^{-kn/m} \\
&=(-1)^{k(n-m)/m} \binom{kn/m-1}{kn/m-k} b^{k(n-m)/m} a^{-kn/m},
\end{align*}
since $\d \binom{-r}{s}=(-1)^{s} \binom{r+s-1}{s}$ for positive integers $r$ and
$s$. In fact, recall that $\gcd(m,n)=1$, so $m |kn$ if and only if $m|k$.
Therefore, from this expression and $\binom{kn/m-1}{kn/m-k}=\binom{kn/m-1}{k-1}$,
we obtain
\[
\res \left( z^{kn-1} \left( -az^{m}-b \right)^{-k}; \infty \right)
= \left\{
\begin{array}{ll}
-\displaystyle(-1)^{kn/m} \binom{kn/m-1}{k-1} b^{k(n-m)/m}a^{-kn/m} & \text{if $m|k$}, \\
0 & \text{otherwise.}
\end{array}
\right.
\]

Applying this expression to \eqref{eq:I-as-residue-S} and then \eqref{eq:lambda-diff-S},
we obtain
\[
\lambda_{n,m,a}-\log |a|
= -\sum_{k \geq 1} \frac{1}{km} (-1)^{kn} \binom{kn-1}{km-1} \Real \left( b^{-km} (b/a)^{kn} \right),
\]
which is equation~\eqref{eq:mahler-S}.

\section{Proofs of Theorems~\ref{thm:house-R}--\ref{thm:house-T} and Corollary~\ref{cor:VF-2}}

We start with a lemma about the location of the real zeros of the trinomials we
are considering.

\begin{Lem}
\label{lem:rt-location}
Let $a \geq 2$ be a real number, $m$ and $n$ are positive relatively prime integers
with $0<m<n$.

\begin{enumerate}
\item
Suppose also that $m$ odd and $n$ even.
$R_{n,m,a}(z)$ has two real zeros, $r_{n,m,a}^{(1)}$ and
$r_{n,m,a}^{(2)}$ satisfying
$r_{n,m,a}^{(1)}>1$ if $a>2$,
$r_{n,m,a}^{(1)}=1$ if $a=2$,
and $0<r_{n,m,a}^{(2)}<1$.

\item
Suppose also that $n$ is odd.
If $m$ is odd, then $S_{n,m,a}(z)$ has one real zero.
If $m$ is even. then $S_{n,m,a}(z)$ has three real zeros.
There is always one real zero, $0<s_{n,m.a}^{(1)}<1$.
If $m$ is even, then there is another real zero, $-1<s_{n,m,a}^{(2)}<0$ and a
third real zero, $s_{n,m,a}^{(3)}$.
If $a>2$, then $s_{n,m,a}^{(3)}<-1$.
If $a=2$, then $s_{n,m,a}^{(3)}=-1$.

\item
If $n$ is odd, then $T_{n,m,a}(z)$ has three real zeros if $m$ is odd and one
real zero if $m$ is even. There is always one zero, $t_{n,m.a}^{(1)}>1$. If $m$
is odd, then there is another real zero, $-1 \leq t_{n,m,a}^{(2)}<0$ and a third
real zero, $t_{n,m,a}^{(3)} \leq -1$.

If $n$ is even, then $T_{n,m,a}(z)$ has two real zeros. One, $t_{n,m,a}^{(1)}>1$
and the other, $-1 < t_{n,m,a}^{(2)}<0$.
\end{enumerate}
\end{Lem}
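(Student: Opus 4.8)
The plan is to prove Lemma~\ref{lem:rt-location} by elementary one-variable calculus applied to each of the three families $R_{n,m,a}$, $S_{n,m,a}$, $T_{n,m,a}$ in parallel. For a trinomial $P(z)=z^{n}+c_{1}z^{m}+c_{0}$ (here $c_{0}=\pm 1$ and $c_{1}=\pm a$) one records the sign of $P$ at the test points $z=0,\pm 1$ and its behaviour as $z\to\pm\infty$, uses the derivative to locate the (very few) real critical points and hence to describe $P$ as a piecewise monotone function on $\bbR$, and finally applies the intermediate value theorem on each maximal interval of monotonicity. The hypothesis $a\ge 2$ enters only through the values of $P$ at $z=\pm 1$.

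First I would assemble the sign data. At the origin $R(0)=1>0$ while $S(0)=T(0)=-1<0$. Since $0<m<n$, at $z=1$ we get $R(1)=2-a\le 0$ (with equality iff $a=2$), $S(1)=a>0$ and $T(1)=-a<0$; at $z=-1$ the values depend on the parities, for instance $R(-1)=a+2>0$ when $n$ is even and $m$ is odd, $S(-1)=a-2\ge 0$ when $n$ is odd and $m$ is even (equality iff $a=2$), $S(-1)=-a-2<0$ when $m,n$ are both odd, and similarly for $T$; the limits at $+\infty$ are always $+\infty$, while at $-\infty$ they are $+\infty$ or $-\infty$ according to the parity of $n$. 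Next I would determine the shape: the derivative factors as $P'(z)=z^{m-1}\bigl(nz^{n-m}+c_{1}m\bigr)$, and the bracket, being linear in $z^{n-m}$, has at most one positive real zero and, according to the parity of $n-m$, at most one negative real zero. Together with the possible zero of $z^{m-1}$ at the origin, this pins down in each parity case an explicit short list of critical points and their signs, hence shows that $P$ is strictly monotone on each interval of an explicit partition of $\bbR$; this already caps the number of real zeros of $P$ at the number claimed in the statement.

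Finally, existence and location of the real zeros follow from the intermediate value theorem on each monotone branch, fed by the sign data above: the ``outer'' positive zero ($r_{n,m,a}^{(1)}$, $t_{n,m,a}^{(1)}$) and the ``outer'' negative zero ($s_{n,m,a}^{(3)}$, $t_{n,m,a}^{(3)}$) come from the sign change between $\pm 1$ and $\pm\infty$, while the zeros inside $(-1,0)$ or $(0,1)$ come from sign changes on the bounded part of the relevant branch. When $a=2$ the relevant value $P(\pm 1)$ equals $0$, which is exactly what produces $r_{n,m,a}^{(1)}=1$, $s_{n,m,a}^{(3)}=-1$ and the other borderline assertions, whereas for $a>2$ the strict sign of $P(\pm 1)$ together with monotonicity forces the root strictly beyond $\pm 1$. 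The main obstacle is the bookkeeping in the previous step: several parity combinations ($n$ and $m$ even or odd, together with the two signs of the constant term) each give a slightly different graph, and in the cases where $P$ has a negative critical point one must check on which monotone branch the point $z=-1$ lies in order to attach each root to the correct label $s^{(2)}/s^{(3)}$ (resp.\ $t^{(2)}/t^{(3)}$) and to make sure that at $a=2$ the two negative roots do not coalesce; the very small cases (notably $n-m=1$) may have to be inspected by hand. None of this is deep, but it must be carried out case by case.
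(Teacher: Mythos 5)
Your proposal is correct, and it differs from the paper's proof only in the mechanism used to cap the number of real zeros. The paper applies Descartes' rule of signs to $T_{n,m,a}(z)$ and to $T_{n,m,a}(-z)$ (and asserts that parts (a) and (b) are identical), then locates the roots exactly as you do, by evaluating the polynomial at $0$, $\pm 1$ and examining the limits at $\pm\infty$. You instead factor the derivative as $z^{m-1}\left(nz^{n-m}+c_{1}m\right)$ and read off the monotone branches. The two routes are interchangeable here; Descartes' rule is quicker and avoids discussing critical points at all, while your monotonicity analysis yields the full shape of the graph, which makes the borderline case $a=2$, the attachment of each root to its label $s^{(2)}/s^{(3)}$ (resp.\ $t^{(2)}/t^{(3)}$), and the non-coalescence of the two negative roots more transparent --- points the paper itself handles somewhat loosely (its own discussion of $a=2$, $m$, $n$ odd concedes that the second negative zero can land on either side of $-1$ depending on whether $m>n/2$). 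One detail to make explicit when you write it up: for $T_{n,m,a}$ with $m$ and $n$ odd there are three critical points ($0$ and $\pm\rho$), hence four intervals of the partition, so the claimed bound of three real zeros requires observing that $z^{m-1}$ does not change sign at $0$ when $m$ is odd (so $0$ is not a local extremum and the two middle intervals merge into a single monotone branch), or equivalently using the sign of $P$ at $0$. With that noted, your argument goes through.
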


\begin{proof}
We prove only part~(c), as the proofs of parts~(a) and (b) are identical.

Using Descartes' Sign Rule, we find there is one positive real zero, since
there is one sign change among the coefficients of $T_{n,m,a}(z)$. Since
$T_{n,m,a}(1)=-a$ and $\lim_{z \rightarrow +\infty} T_{n,m,a}(z)=+\infty$, this
zero is strictly greater than $1$.

Applying Descartes' Sign Rule to $T_{n,m,a}(-z)=(-1)^{n}z^{n}+(-1)^{m-1}az^{m}-1$,
there are two negative zeros if $m$ and $n$ are both odd; no negative zeros if
$n$ is odd and $m$ is even; and one negative zero if $n$ is even (in which case,
$m$ is odd).

We have $T_{n,m,a}(-1)=(-1)^{n}-(-1)^{m}a-1$ equals $(-1)^{m-1}a=a$ if $n$ is even
(recalling that $m$ is odd in this case)
and $(-1)^{m-1}a-2$ if $n$ is odd. For $n$ even and since $T_{n,m,a}(0)=-1$, it
follows that the unique negative real zero is strictly between $-1$ and $0$.

Lastly, for $n$ odd and $m$ odd, we have $T_{n,m,a}(0)=-1$ and $T_{n,m,a}(-1)=a-2$.
So if $a>2$ and since $\lim_{z \rightarrow -\infty} T_{n,m,a}(z)=-\infty$, there
must be one in $(-1,0)$ and another in $(-\infty, -1)$. For $a=2$, there is a
zero at $z=-1$ and the second negative zero turns out to be less than $-1$ if
$m>n/2$ or in $(-1,0)$ if $m<n/2$.
\end{proof}

\subsection{Proof of Theorem~\ref{thm:house-R}}

When $n$ is even and $m$ is odd, from Lemma~\ref{lem:rt-location}(a),
$r_{n,m,a}^{(1)}$ is the unique real zero of the trinomial $R_{n,m,a}$ which
satisfies $r_{n,m,a}^{(1)} \geq 1$ and put $r_{n,m,a}^{(1)}=1+t$ with $t \geq 0$.
From the expression for $R_{n,m,a}$, we  we see that $(1+t)^{n}-a(1+t)^{m}+1=0$, so
$(1+t)^{n}-a(1+t)^{m}+1+(1+t)^{m} \geq 1$, i.e., $(1+t)^{n}-(a-1)(1+t)^{m} \geq 0$.
Thus
\[
(1 + t)^{n} \geq (a-1)(1 + t)^{m}.
\]

Let $t_{0}$ be the largest real number such that
\[
\left(1 + t_{0} \right)^{n} = (a-1) \left( 1 + t_{0} \right)^{m},
\]
then $t \geq t_{0}$ and $t_{0}=\exp \left( \log(a-1)/(n-m) \right)-1$. Using the
Taylor expansion of $\exp(x)$, $t_{0} \geq \log(a-1)/(n-m)$ and the theorem follows.

\subsection{Proof of Theorem~\ref{thm:house-S}}

When $n$ is odd and $m$ is even, recall from Lemma~\ref{lem:rt-location}(b)
that $s_{n,m,a}^{(3)}$ is the unique real zero of the trinomial $S_{n,m,a}$ which
satisfies $s_{n,m,a}^{(3)} \leq -1$ and put $s_{n,m,a}^{(3)}=-(1+t)$ with $t \geq 0$.
From the expression for $S_{n,m,a}$, we see that $(1+t)^{n}-a(1+t)^{m}+1=0$, so
as above
\[
(1 + t)^{n} \geq (a-1)(1 + t)^{m}
\]
and the proof follows as above too.

\subsection{Proof of Theorem~\ref{thm:house-T}}

From Lemma~\ref{lem:rt-location}(c), $t_{n,m,a}^{(1)}$ is the unique real
zero of $T_{n,m,a}$ which satisfies $t_{n,m,a}^{(1)}>1$ and put $t_{n,m,a}^{(1)}=1+t$
with $t>0$. From the expression for $T_{n,m,a}$, we know that
\[
(1 + t)^{n}>a(1 + t)^{m}.
\]

If
\[
\left(1 + t_{0} \right)^{n} = a \left( 1 + t_{0} \right)^{m},
\]
then $t>t_{0}$ and $t_{0}=\exp \left( \log(a)/(n-m) \right)-1$. Using the Taylor
expansion of $\exp(x)$, $t_{0}>\log(a)/(n-m)$ and our result follows.

\subsection{Proof of Corollary~\ref{cor:VF-2}}

We prove only part~(c) as the proof for the other two parts is identical.

It is clear that $m(n) \leq 2^{1/n}$.
We have $T_{n,m,a} \left( 2^{1/n} \right)= 1- a 2^{m/n} < 0$, since $a \geq 2$.
Since $T_{n,m,a}(z)>0$ for all $z>\theta_{m,n,a}$, we deduce $t_{n,m,a}^{(1)} > 2^{1/n}$,
which implies the non-extremality of $t_{n,m,a}^{(1)}$.

\noindent
V. Flammang:\\
UMR CNRS 7502. IECL, Universit\'e de Lorraine, site de Metz,\\
D\'epartement de Math\'ematiques, UFR MIM,\\
Ile du Saulcy, CS 50128. 57045 METZ cedex 01. FRANCE\\
E-mail address: valerie.flammang@univ-lorraine.fr

\vspace*{3.0mm}

\noindent
P. Voutier:\\
London, UK\\
E-mail address: Paul.Voutier@gmail.com

\begin{thebibliography}{20}
\bibitem[Ah]{Ah}
L. Ahlfors.
\emph{Complex Analysis}, 3rd ed., McGraw-Hill (1979).


\bibitem[B1]{B1}
D. Boyd.
{\em The maximal modulus of an algebraic integer},
Math. Comp. {\bf 45} (1985), 243--245.

\bibitem[B2]{B2}
D. W. Boyd.
{\em Speculations concerning the range of Mahler's measure},
Canad. Math. Bull. {\bf 24} no. 4 (1981), 453--469.

\bibitem[BM]{BM}
D. W. Boyd, M. J. Mossinghoff,
{\em Small Limit Points of Mahler's Measure},
Exp. Math. {\bf 14} no. 4 (2005), 403--414.

{\em Speculations concerning the range of Mahler's measure},
Canad. Math. Bull. {\bf 24} no. 4 (1981), 453--469.

\bibitem[Br]{Br}
A. Bremner.
{\em On trinomials of type $x^{n}+Ax^{m} +1$},
Math. Scand. {\bf 49} no. 2 (1981), 145--155.

\bibitem[Ca]{Ca}
H. Cartan.
\emph{Elementary Theory of Analytic Functions of One or Several Complex Variables},
Dover (1995).

\bibitem[Di]{Di}
V. Dimitrov.
{\em A proof of the Schinzel-Zassenhaus conjecture on polynomials},
\url{https://arxiv.org/abs/1912.12545v1}, Dec. 2019.

\bibitem[Dub]{Dub}
A. Dubickas.
{\em On a conjecture of A. Schinzel and H. Zassenhaus},
Acta Arith. {\bf 63} no. 1 (1993), 15--20.

\bibitem[Duk]{Duk}
W. Duke.
{\em A combinatorial problem related to Mahler measure},
Bull. Lond. Math. Soc. {\bf 39(5)} (2007), 741--748.

\bibitem[F1]{F1}
V. Flammang.
{\em The Mahler measure of trinomials of height $1$}, 
J. Aust. Math. Soc. {\bf 96} no. 2 (2014), 231--243.




\bibitem[J]{J}
J.L.V.W. Jensen.
{\em Sur un nouvel et important th\'{e}or\`{e}me de la th\'{e}orie des fonctions},
Acta Math. {\bf 22} (1899), 359--364.




\bibitem[L]{L}
D. H. Lehmer.
{\em Factorization of certain cyclotomic functions},
Ann. of Math. {\bf (2) 34} no. 3 (1933), 461--479.

\bibitem[M]{M}
E. M. Matveev.
{\em On the cardinality of algebraic integers},
Math. Notes {\bf 49} (1991), 437--438.



\bibitem[Pe]{Pe}
O. Perron.
\emph{Neue Kriterien f\"{u}r die Irreduzibilit\"{a}t algebraischer Gleischungen},
J. reine angew. Math. {\bf 132} (1907), 288--307.



\bibitem[RW2]{RW2}
G. Rhin, Q. Wu.
{\em On the smallest value of the maximal modulus of an algebraic integer},
Math. Comp. {\bf 76} (258) (2007), 1025--1038.


\bibitem[Sch]{Sch}
A. Schinzel.
\emph{On reducible trinomials},
Dissertationes Mathematicae {\bf 329} (1993).
 
\bibitem[SZ]{SZ}
A. Schinzel, H. Zassenhaus.
{\em A refinement of two theorems of Kronecker},
Michigan Math. J. {\bf 12} (1965), 81--85.

\bibitem[S1]{S1}
C.~J. Smyth.
{\em On the product of the conjugates outside the unit circle of an algebraic integer},
Bull. London Math. Soc. {\bf 3} (1971), 169--175.


\bibitem[S]{S}
D. Stankov.
{\em Boyd's conjecture},
\url{https://arxiv.org/abs/arXiv:1401.1688v2}, March 2014.

%
\bibitem[VG]{VG}
J.-L. Verger-Gaugry.
{\em On the conjecture of Lehmer, limit Mahler measure of trinomials and asymptotic expansions},
Unif. Distrib. Theory {\bf 11} no. 1 (2016), 79--139.

\bibitem[V]{V}
P. Voutier.
{\em An effective lower bound for the height of algebraic numbers},
Acta Arith. {\bf 74} no. 1 (1996), 81--95.

\end{thebibliography}
\end{document}